\newtheorem{Theorem}{Theorem}[section]
\newtheorem{Proposition}[Theorem]{Proposition}
\newtheorem{Lemma}[Theorem]{Lemma}
\newtheorem{Corollary}[Theorem]{Corollary}
\theoremstyle{definition}
\newtheorem{Definition}[Theorem]{Definition}
\newtheorem{Remark}[Theorem]{Remark}
\newcommand{\bTheorem}[1]{
\begin{Theorem} \label{T#1} }
\newcommand{\eT}{\end{Theorem}}
\newcommand{\bProposition}[1]{
\begin{Proposition} \label{P#1}}
\newcommand{\eP}{\end{Proposition}}
\newcommand{\bLemma}[1]{
\begin{Lemma} \label{L#1} }
\newcommand{\eL}{\end{Lemma}}
\newcommand{\bCorollary}[1]{
\begin{Corollary} \label{C#1} }
\newcommand{\eC}{\end{Corollary}}
\newcommand{\tvS}{\widetilde{S}}
\newcommand{\bRemark}[1]{
\begin{Remark} \label{R#1} }
\newcommand{\eR}{\end{Remark}}
\newcommand{\bDefinition}[1]{
\begin{Definition} \label{D#1} }
\newcommand{\eD}{\end{Definition}}
\newcommand{\bu}{\mathbf u}
\newcommand{\tmmathbf}[1]{\ensuremath{\boldsymbol{#1}}}
\newcommand{\tvm}{\tilde{\vc{m}}}
\newcommand{\bfphi}{\boldsymbol{\varphi}}
\newcommand{\bFormula}[1]{
\begin{equation} \label{#1}}
\newcommand{\eF}{\end{equation}}
\newcommand{\Ov}[1]{\overline{#1}}
\newcommand{\DC}{C^\infty_c}
\newcommand{\aleq}{\stackrel{<}{\sim}}
\newcommand{\ageq}{\stackrel{>}{\sim}}
\newcommand{\vr}{\varrho}
\newcommand{\tvr}{\tilde \vr}
\newcommand{\vt}{\vartheta}
\newcommand{\vu}{\vc{u}}
\newcommand{\vm}{\vc{m}}
\newcommand{\vc}[1]{{\bf #1}}
\newcommand{\Div}{{\rm div}_x}
\newcommand{\Grad}{\nabla_x}
\newcommand{\dx}{\,{\rm d} {x}}
\newcommand{\dt}{\,{\rm d} t }
\newcommand{\vU}{\vc{U}}
\newcommand{\intO}[1]{\int_{\Omega} #1 \ \dx}
\newcommand{\intRd}[1]{\int_{R^d} #1 \ \dx}
\newcommand{\D}{{\rm d}}
\newcommand{\mathd}{\mathrm{d}}
\def\softd{{\leavevmode\setbox1=\hbox{d}%
          \hbox to 1.05\wd1{d\kern-0.4ex{\char039}\hss}}}
\definecolor{Cgrey}{rgb}{0.85,0.85,0.85}
\definecolor{Cblue}{rgb}{0.50,0.85,0.85}
\definecolor{Cred}{rgb}{1,0,0}
\definecolor{fancy}{rgb}{0.10,0.85,0.10}
\newcommand\Cbox[2]{%
    \newbox\contentbox%
    \newbox\bkgdbox%
    \setbox\contentbox\hbox to \hsize{%
        \vtop{
            \kern\columnsep
            \hbox to \hsize{%
                \kern\columnsep%
                \advance\hsize by -2\columnsep%
                \setlength{\textwidth}{\hsize}%
                \vbox{
                    \parskip=\baselineskip
                    \parindent=0bp
                    #2
                }%
                \kern\columnsep%
            }%
            \kern\columnsep%
        }%
    }%
    \setbox\bkgdbox\vbox{
        \color{#1}
        \hrule width  \wd\contentbox %
               height \ht\contentbox %
               depth  \dp\contentbox
        \color{black}
    }%
    \wd\bkgdbox=0bp%
    \vbox{\hbox to \hsize{\box\bkgdbox\box\contentbox}}%
    \vskip\baselineskip%
}
\date{}
\begin{document}


\title{On convergence of approximate solutions to the compressible Euler system}

\author{Eduard Feireisl
\thanks{The research of E.F. leading to these results has received funding from
the Czech Sciences Foundation (GA\v CR), Grant Agreement
18--05974S. The Institute of Mathematics of the Academy of Sciences of
the Czech Republic is supported by RVO:67985840. The stay of E.F. at TU Berlin is supported by Einstein Foundation, Berlin.}
\and Martina Hofmanov\' a \thanks{M.H. gratefully acknowledges the financial support by the German Science Foundation DFG via the Collaborative Research Center SFB1283.}
}


\maketitle

\centerline{Institute of Mathematics of the Academy of Sciences of the Czech Republic}
\centerline{\v Zitn\' a 25, CZ-115 67 Praha 1, Czech Republic}

\centerline{and}

\centerline{Institute of Mathematics, Technische Universit\"{a}t Berlin,}
\centerline{Stra{\ss}e des 17. Juni 136, 10623 Berlin, Germany}
\centerline{feireisl@math.cas.cz}

\bigskip

\centerline{Fakult\"at f\"ur Mathematik, Universit\"at Bielefeld}
\centerline{D-33501 Bielefeld, Germany}
\centerline{hofmanova@math.uni-bielefeld.de}

\begin{abstract}

We consider a sequence of approximate solutions to the compressible Euler system
admitting uniform energy bounds and/or satisfying 
the relevant field equations modulo an error vanishing in the asymptotic limit.
We show that such a sequence 
either {\bf (i)} converges strongly in the energy norm, or {\bf (ii)} the limit is not a weak solution of the associated Euler system.
This  is in sharp contrast to the incompressible case, where (oscillatory) approximate solutions may converge weakly to  solutions of the Euler system. Our approach leans on identifying a system of differential equations
satisfied by the associated turbulent defect measure and showing that it only has a trivial solution.

\end{abstract}

{\bf Keywords:} Compressible Euler system, convergence, weak solution, defect measure

\bigskip

\section{Introduction}
\label{i}

In \cite[Section 4]{GreTho}, Greengard and Thomann constructed a sequence $\{ \vc{v}_n \}_{n=1}^\infty$ of exact solutions to the \emph{incompressible} Euler system in $R^2$,
compactly supported in the space variable, and converging \emph{weakly} to the velocity field $\vc{v} = 0$. As $\vc{v} = 0$ is obviously
a solution of the Euler system, this is an example of a sequence of solutions to the incompressible Euler system defined on the whole space $R^2$
and converging weakly to another solution of the same problem. We show that such a scenario is impossible in the context of
\emph{compressible} fluid flows.

We consider consider a sequence of \emph{approximate} solutions to the compressible Euler system. Motivated by the numerical terminology 
we distinguish {\bf (i)} \emph{stable approximation}, where the approximate solutions satisfy the relevant uniform bounds, 
and {\bf (ii)} \emph{consistent approximation}, where the field equations of the Euler system are satisfied modulo an error vanishing in the asymptotic limit.
A prominent example of consistent approximation is the \emph{vanishing viscosity} limit, where the approximate solutions 
satisfy the Navier--Stokes system. In the light of the recent  results \cite{Chiod, ChiDelKre, ChiKre, ChKrMaSwI} indicating essential ill--posedness of the compressible
Euler system, the vanishing viscosity limit might be seen as a sound selection criterion to identify the physically relevant solutions of systems describing inviscid fluids, although this can be still arguable in view of the examples collected in the recent survey by Buckmaster and Vicol \cite{BucVic} and  Constantin and Vicol \cite{CoVi18}.
The principal difficulties of this process, caused in particular by the presence of kinematic boundaries, are well understood in the case of incompressible fluids, see e.g. the survey of E \cite{E1}. However, much less is known in the compressible case. Leaving apart the boundary layer issue, Sueur \cite{Sue1} proved unconditional convergence
in the barotropic case provided
the Euler system admits a smooth solution. A similar result was obtained for the full Navier--Stokes/Euler systems in \cite{Fei2015A}.
However, as many solutions of the Euler system are known to develop discontinuities in finite time, it is of essential interest to understand
the inviscid limit provided the target solution is not smooth. Very recently, Basari\' c \cite{Basa} identified the vanishing viscosity limit
with a measure--valued solution to the Euler system on general, possibly unbounded, spatial domains, which can be seen as a ``compressible'' counterpart of the pioneering work of DiPerna and Majda \cite{DIMA} in the incompressible case. The incompressible setting was further studied in  space dimension two and for vortex sheet initial data by DiPerna and Majda \cite{DipMaj88, DipMaj87} and Greengard and Thomann \cite{GreTho}. Their results show that the set, where the approximate solutions do not converge strongly is either
empty or its projection on the time axis is of positive measure.

As the name suggests, numerous consistent approximations can be identified 
with sequences of numerical solutions, see e.g. \cite{FeiLM}, \cite{FeiLMMiz}.
There is a strong piece of evidence, see e.g. Fjordholm et al. \cite{FjKaMiTa}, \cite{FjLyMiWe}, \cite{FjMiTa1}, that the numerical solutions to the compressible Euler system develop fast oscillations (wiggles) in the asymptotic limit.
The resulting object is described by the associated Young measure and it is therefore of interest to know in which sense the limit
Euler system is satisfied. In accordance with the seminal paper by DiPerna and Majda \cite{DIMA}, the limit should be identified with
a generalized \emph{measure--valued} solution of the Euler system. The concept of measure--valued solution used
also more recently in Basari\' c \cite{Basa}, however, follows the philosophy: the more general the better, while preserving a suitable weak (measure--valued)/strong uniqueness principle. Such an approach is typically beneficial for a number of applications in numerical analysis.
As a matter of fact, a more refined description of the asymptotic limit can be obtained via  Alibert--Bouchitt\'e's
\cite{AliBou} framework employed by Gwiazda, \'Swierczewska--Gwiazda, and Wiedemann \cite{GSWW}. Here, similarly to the work by Chen and Glimm \cite{CheGli}, the
measure--valued solutions are defined for the density $\vr$ and the weighted velocity $\sqrt{\vr} \vu$ yielding a rather awkward definition of a solution.

Our approach is based on estimating the distance between an approximate sequence and its limits by means of 
the so--called \emph{Bregman divergence}
\begin{equation} \label{BD}
\mathcal{E}\left( \vU \ \Big| \vc{V} \right) = \intO{ \Big[ E(\vU) - \xi \cdot (\vU - \vc{V}) - E(\vc{V}) \Big] } ,\ \xi \in \partial E(\vc{V}),
\end{equation}
where $\vU$, $\vc{V}$ are measurable functions on the fluid domain $\Omega \subset R^d$ ranging in
$R^m$, and $E: R^m \to [0, \infty]$ is a strictly convex function, see e.g. Sprung \cite{Sprung}. In the context of the Euler system, 
the function $E$ is the total energy; whence $\mathcal{E}$ may be see as \emph{relative energy} in the sense of Dafermos \cite{Daf4}.
Strict convexity of $E$ is then nothing other than a formulation of the principle of thermodynamic stability, where the relevant 
phase variables are the density $\vr$, the momentum $\vm$, and the total entropy $S$, cf. Bechtel, Rooney, and Forrest \cite{BEROFO}.

We consider both the full Euler system and its isentropic variant. In the former case, we show that any \emph{stable} approximation 
either converge pointwise or its limit is not a weak solution of the Euler system. The proof is based mainly on the fact that the 
total energy is a conserved quantity for the limit system. The isentropic case is more delicate, as the energy conservation 
is in general violated by the weak solutions. Here, we consider \emph{consistent} approximation and show that the energy defect, 
expressed through the asymptotic limit of the Bregman distance  is  
intimately related to \emph{turbulent defect measure} in the momentum equation. In fact, the defect in the momentum equation directly controls the defect in the energy. 
(The converse, meaning the defect in the energy controls the defect in the momentum equation, is also true and indispensable but not of direct use in the present setting). Furthermore, the turbulent defect measure $\mathbb{D}(t)$ is for a.e. time given by a (symmetric) positive semidefinite matrix--valued finite Borel  measure on the
physical space $\Omega \subset R^{d}$ in the sense that
\begin{equation*}
\mathbb{D}(t) : (\xi \otimes \xi) \ \mbox{is a non--negative finite measure on}\ \Omega
\ \mbox{for any} \ \xi \in R^d,
\end{equation*}
and it can be identified along with a system of differential equations it obeys.
In particular, we show below that
the problem of convergence towards a weak solution reduces to solving a system of differential equations
\begin{equation} \label{def1}
\Div \mathbb{D}(t) = 0.
\end{equation}

The paper is organized as follows. In Section \ref{D}, we recall the concept of \mbox{weak solution} for both the complete Euler system
and its isentropic variant. We introduce the notion of 
stable and consistent approximations and state the main results. In Section \ref{C}, we study convergence of stable approximations 
to the full Euler system. Section \ref{S} is devoted to the same problem for the isentropic Euler system. Possible extensions of the results are discussed in Section \ref{DD}.

\section{Approximate solutions to the Euler system, main results}
\label{D}

The complete Euler system governing the time evolution of the density $\vr = \vr(t,x)$, the momentum $\vm = \vm(t,x)$, and the
energy $E = E(t,x)$ of a compressible perfect fluid reads:
\begin{equation} \label{D1}
\begin{split}
\partial_t \vr + \Div \vm &=0,\\
\partial_t \vm + \Div \left( \frac{\vm \otimes \vm}{\vr} \right) + \Grad p &= 0,\\
\partial_t E + \Div \left[ \left( E + p \right) \frac{\vm}{\vr} \right] &=0.
\end{split}
\end{equation}
We suppose the fluid is confined to a domain $\Omega \subset R^d$ with impermeable boundary,
\begin{equation} \label{D2}
\vm \cdot \vc{n}|_{\partial \Omega} = 0.
\end{equation}
Mostly we deal with the \emph{admissible} weak solutions satisfying the Euler system \eqref{D1} in the sense of distributions, together
with the (renormalized) entropy inequality
\begin{equation} \label{D3}
\partial_t (\vr Z(s)) + \Div \left( Z(s) \vm \right) \geq 0
\end{equation}
for any $Z \in BC(R)$, $Z' \geq 0$, cf. e.g. Chen and Frid \cite{CheFr1}. Here $p$ is the pressure and $s$ is the entropy related to the internal energy $e$ through
Gibbs' equation
\begin{equation} \label{D4}
\vt D s = De + p D \left( \frac{1}{\vr} \right),\ \mbox{where}\ \vt \ \mbox{is the absolute temperature.}
\end{equation}
Introducing the total entropy $S = \vr s$, we write all thermodynamic functions in terms of the basic \emph{phase variables}
$[\vr, \vm, S]$,
\[
E = \frac{1}{2} \frac{|\vm|^2}{\vr} + \vr e(\vr, S),\
p = p(\vr, S).
\]
The cornerstone of the forthcoming analysis is the \emph{thermodynamic stability hypothesis}:
\[
\mbox{The total energy}\
[\vr, \vm, S] \in R^{d+2} \mapsto E(\vr, \vm, S) \equiv \frac{1}{2} \frac{|\vm|^2}{\vr} + \vr e(\vr, S) \in [0, \infty]
\]
is a strictly convex l.s.c. function, 
where we set
\begin{equation} \label{D4a}
E(\vr, \vm, S) =  \infty \ \mbox{whenever}\ \vr < 0, \
E(0, \vm, S) = \lim_{\vr \to 0+} E(\vr, \vm, S),
\end{equation}
cf. Bechtel, Rooney, Forrest \cite{BEROFO}.
To avoid further technicalities, we suppose the polytropic relation between the pressure and the internal energy
\[
p = (\gamma - 1) \vr e,\ \gamma > 1, \ \mbox{and set}\ e = c_v \vt,\ c_v = \frac{1}{\gamma - 1}.
\]
Accordingly, the total energy takes the form
\begin{equation} \label{D5}
E(\vr, \vm, S) = \left\{ \begin{array}{l} \frac{1}{2} \frac{|\vm|^2}{\vr} + \vr^\gamma \exp \left( \frac{S}{c_v \vr} \right) \ \mbox{if}\ \vr > 0,\\
0 \ \mbox{for}\ \vr = 0, \ \vm = 0,\ S \leq 0,\\ \infty, \ \mbox{otherwise} \end{array} \right.
\end{equation}
for which the desired convexity has been verified in \cite{BreFeiHof19C}.

\subsection{Weak solutions to the complete Euler system}

\begin{Definition}[Admissible weak solution to complete Euler system] \label{DD1}

Let $\Omega \subset R^d$, $d=1,2,3$ be a domain with Lipschitz boundary.

We say that $[\vr, \vm, S]$ is an \emph{admissible weak solution} to the Euler system \eqref{D1}--\eqref{D3} in $(0,T) \times \Omega$
with the initial data $[\vr_0, \vm_0, S_0]$, if
\begin{itemize}
\item
\[
\vr \geq 0 \ \mbox{a.a. in}\ (0,T) \times \Omega,\ S = 0 \ \mbox{a.a. in the set}\
\{ \vr = 0 \};
\]
\item
\begin{equation} \label{D6}
\left[ \intO{ \vr \varphi } \right]_{t = 0}^{t = \tau}
= \int_0^\tau \intO{ \left[ \vr \partial_t \varphi + \vm \cdot \Grad \varphi \right] },\ \vr(0, \cdot) = \vr_0,
\end{equation}
for any $0 \leq \tau < T$, $\varphi \in C^1_c([0,T) \times \Ov{\Omega})$;
\item
\begin{equation} \label{D7}
\begin{split}
\left[ \intO{ \vm \cdot \bfphi } \right]_{t = 0}^{t = \tau}
&= \int_0^\tau \intO{ \left[ \vm \cdot \partial_t \bfphi + 1_{\vr > 0} \frac{\vm \otimes \vm}{\vr} : \Grad \bfphi
+ p(\vr, S) \Div \bfphi \right] }\dt,\\ \vm(0, \cdot) &= \vm_0,
\end{split}
\end{equation}
for any $0 \leq \tau < T$, $\bfphi \in C^1_c([0,T) \times \Ov{\Omega})$, $\bfphi \cdot \vc{n}|_{\partial \Omega} = 0$;
\item
\begin{equation} \label{D8}
\begin{split}
\left[ \intO{ E(\vr, \vm, S) \varphi } \right]_{t = 0}^{t = \tau}&\\
= \int_0^\tau & \intO{ \left[ E(\vr, \vm, S) \partial_t \varphi + 1_{\vr > 0}
\left[ \left( E(\vr, \vm, S) + p(\vr, S)  \right) \frac{\vm}{\vr} \right] \cdot \Grad \varphi \right]
}\dt,\\ E(\vr, \vm, S)(0, \cdot) &= E(\vr_0, \vm_0, S_0),
\end{split}
\end{equation}
for any $0 \leq \tau < T$, $\varphi \in C^1_c([0,T) \times \Ov{\Omega})$;
\item
\begin{equation} \label{D9}
\begin{split}
\left[ \intO{ \vr Z \left(\frac{S}{\vr} \right) \varphi } \right]_{t = 0}^{t = \tau}
&\geq \int_0^\tau \intO{ \left[ \vr Z\left(\frac{S}{\vr} \right) \partial_t \varphi +
Z \left( \frac{S}{\vr} \right) {\vm} \cdot \Grad \varphi \right] }\dt,\\ 
\vr Z \left(\frac{S}{\vr} \right)(0, \cdot) &= \vr_0 Z \left(\frac{S_0}{\vr_0} \right),
\end{split}
\end{equation}
for a.a. $0 \leq \tau < T$, and 
any $\varphi \in C^1_c([0,T) \times \Ov{\Omega})$, $\varphi \geq 0$, and $Z \in BC(R) \cap C^1(R)$,
$Z' \geq 0$.

\end{itemize}

\end{Definition}

In Definition \ref{DD1}, we tacitly assume that all quantities under integrals are at least locally integrable
in $[0,T) \times \Ov{\Omega}$.

\subsection{Weak solutions to the isentropic Euler system}

The \emph{isentropic} Euler system is formally obtained from \eqref{D1} by requiring the entropy $s = \Ov{s}$ to be constant.
The total energy given by \eqref{D5} simplifies to 
\begin{equation} \label{D10}
E = E(\vr, \vm) =  \frac{1}{2} \frac{|\vm|^2}{\vr} + P(\vr),\ P(\vr) \equiv \frac{a}{\gamma - 1} \vr^\gamma, \
p = p(\vr) = a \vr^\gamma, \ a > 0.
\end{equation}
We consider the isentropic Euler system on the whole space $R^d$, with the far field boundary conditions
\begin{equation} \label{D11}
\vr \to \vr_\infty \geq 0,\ \vm \to \vm_\infty = \vr_\infty \vu_\infty \ \mbox{as}\ |x| \to \infty,
\end{equation}
where $\vr_\infty$ and $\vu_\infty$ are give constant fields.
Consequently, it is more convenient to replace $E$ by the relative energy
\[
\begin{split}
E &\left( \vr, \vm \ \Big|\ \vr_\infty, \vm_\infty\right) \\ &=
\frac{1}{2} \frac{|\vm|^2}{\vr} - \vm \cdot \vu_\infty + \frac{1}{2} \vr |\vu_\infty|^2 +
P(\vr) - P'(\vr_\infty) (\vr - \vr_\infty) - P(\vr_\infty) \\ &=
\frac{1}{2} \vr \left| \frac{\vm}{\vr} - \vu_\infty \right|^2 + P(\vr) - P'(\vr_\infty) (\vr - \vr_\infty) - P(\vr_\infty).
\end{split}
\]
As pointed out in the introductory part, the relative energy is nothing other than the Bregman divergence associated to the convex function $E$, cf. \eqref{BD}.

\begin{Definition}[Weak solution to isentropic Euler system] \label{DD2}

We say that $[\vr, \vm]$ is a \emph{weak solution} to the Euler system  in $(0,T) \times R^d$,
with the initial data $[\vr_0, \vm_0]$
and the far field conditions \eqref{D11},
if
\begin{itemize}
\item
\[
\vr \geq 0 \ \mbox{a.a. in}\ (0,T) \times R^d;
\]
\item
\begin{equation} \label{D12}
\left[ \intRd{ \vr \varphi } \right]_{t = 0}^{t = \tau}
= \int_0^\tau \intRd{ \left[ \vr \partial_t \varphi + \vm \cdot \Grad \varphi \right] },\ \vr(0, \cdot) = \vr_0,
\end{equation}
for any $0 \leq \tau < T$, $\varphi \in C^1_c([0,T) \times R^d)$;
\item
\begin{equation} \label{D13}
\begin{split}
\left[ \intRd{ \vm \cdot \bfphi } \right]_{t = 0}^{t = \tau}
&= \int_0^\tau \intRd{ \left[ \vm \cdot \partial_t \bfphi + 1_{\vr > 0} \frac{\vm \otimes \vm}{\vr} : \Grad \bfphi
+ p(\vr) \Div \bfphi \right] }\dt, \\ \vm(0, \cdot) = \vm_0,
\end{split}
\end{equation}
for any $0 \leq \tau < T$, $\bfphi \in C^1_c([0,T) \times R^d, R^d)$.

\end{itemize}

We say that a weak solution is \emph{admissible}, if, in addition, the energy inequality
\begin{equation} \label{D14}
\intRd{ E \left(\vr, \vm \ \Big| \ \vr_\infty, \vm_\infty \right)(\tau, \cdot) } 
\leq \intRd{ E \left(\vr_0, \vm_0 \ \Big| \ \vr_\infty, \vm_\infty \right) } 
\end{equation}
holds for any $0 \leq \tau < T$.

\end{Definition}

Note that the total energy balance \eqref{D8} that 
is an \emph{integral part} of the weak formulation for the complete Euler system has been replaced by the integrated energy inequality
\eqref{D14} that plays the role of \emph{admissibility condition} similar to the entropy inequality \eqref{D9}.
In \eqref{D14}, we tacitly assume that the initial (relative) energy is finite, meaning that the initial data satisfy the far field
conditions \eqref{D11}.

\subsection{Stable and consistent approximations}

The following two definitions are motivated by the terminology used in the \emph{numerical analysis}.

\begin{Definition}[Stable approximation of the full Euler system] \label{DD3}

We say that a sequence $$\{ \vr_n, \vm_n, S_n \}_{n=1}^\infty$$ is a \emph{stable approximation} of the
full Euler system in $(0,T) \times \Omega$, with the initial data $[\vr_0, \vm_0, S_0]$, if:
\begin{equation} \label{D15a}
\begin{split}
\vr_n \geq 0,\ {\rm ess} \sup_{\tau \in (0,T)} \intO{ \vr_n(\tau, \cdot)  } &\leq M,\\
{\rm ess} \inf_{\tau \in (0,T)} \intO{ S_n (\tau, \cdot) } &\geq \underline{S}
\end{split}
\end{equation}
uniformly for $n \to \infty$;
\begin{equation} \label{D15}
{\rm ess} \sup_{\tau \in (0,T)} \intO{ E(\vr_n, \vm_n, S_n) } \leq \intO{ E(\vr_0, \vm_0, S_0)} + e_n
\ \mbox{for all}\ n = 1,2,\dots
\end{equation}
where $e_n \to 0$ as $n \to \infty$.

\end{Definition}

Note that both \eqref{D15a} and \eqref{D15} obviously hold for any admissible weak solution in the sense of Definition \ref{DD1}.

Next, we introduce the concept of consistent approximation of the isentropic Euler system in $(0,T) \times R^d$ supplemented with
the far field conditions \eqref{D11}.

\begin{Definition}[Consistent approximation of isentropic Euler system] \label{DD4}

We say that a sequence $\{ \vr_n, \vm_n \}_{n=1}^\infty$ is a \emph{consistent approximation} of the isentropic Euler system
in $(0,T) \times R^d$, with the far field conditions \eqref{D11} if:
\begin{itemize}
\item
\[
\vr_n \geq 0 \ \mbox{a.a. in}\ (0,T) \times R^d;
\]
\item
\begin{equation} \label{D16}
\int_0^\tau \intRd{ \left[ \vr_n \partial_t \varphi + \vm_n \cdot \Grad \varphi \right] } = e^1_n(\tau, \varphi)
\end{equation}
for any $\varphi \in \DC((0,T) \times R^d)$, $0 \leq \tau \leq T$;
\item
\begin{equation} \label{D17}
\int_0^\tau \intRd{ \left[ \vm_n \cdot \partial_t \bfphi + 1_{\vr > 0} \frac{\vm_n \otimes \vm_n}{\vr_n} : \Grad \bfphi
+ p(\vr_n) \Div \bfphi \right] }\dt = e^2_n (\tau, \bfphi)
\end{equation}
for any $\bfphi \in \DC((0,T) \times R^d, R^d)$, $0 \leq \tau \leq T$;
\item
\begin{equation} \label{D18}
\intRd{ E \left(\vr_n, \vm_n \ \Big| \ \vr_\infty, \vm_\infty \right) (\tau, \cdot) } \leq c
\end{equation}
uniformly for $0 \leq \tau \leq T$, $n=1,2, \dots$;
\item
\begin{equation} \label{D19}
e^1_n (\tau, \varphi) \to 0,\ e^2_n (\tau, \bfphi) \to 0 \ \mbox{as}\ n \to \infty
\end{equation}
for any fixed $0 \leq \tau \leq T$, $\varphi \in \DC((0,T) \times R^d)$, $\bfphi \in \DC((0,T) \times R^d; R^d)$.

\end{itemize}

\end{Definition}

Note carefully the difference between stable and consistent approximation. Stable approximation only satisfies the relevant
{\it a priori} bounds and approaches the energy of the initial data in the asymptotic limit. Consistent approximation satisfies the weak formulation of the
field equations modulo a small error vanishing in the asymptotic limit.

\subsection{Main results}

We start by the result concerning stable approximation to the complete Euler system. Recall that the only uniform bounds available
result from the hypothesis \eqref{D15a}, and the energy inequality \eqref{D15}. 
In particular, as we shall see below, the uniform bounds \eqref{D15a}, \eqref{D15} guarantee only $L^1-$integrability of the phase variables
$(\vr_n, \vm_n, S_n)$ with respect to the $x-$variable. Accordingly, we consider the concept of
\emph{biting limit} in the sense of Ball and Murat \cite{BAMU} to describe the asymptotic behavior of a stable approximation to the complete Euler system. The result reads as follows.

\begin{Theorem}[Asymptotic limit of stable approximation] \label{DT1}
Let $\Omega \subset R^d$ be a bounded Lipschitz domain. Let $\{ \vr_n, \vm_n, S_n \}_{n=1}^\infty$
be a stable approximation of the complete Euler system in the sense of Definition \ref{DD3}, with the initial data 
\begin{equation} \label{D20b}
\vr_0 > 0, \ \vm_0,\ S_0 \geq \vr_0 \underline{s}, \ \mbox{where}\ \underline{s} \in R.
\end{equation}

Then there exists a subsequence (not relabeled for simplicity) enjoying the following properties:
\begin{equation} \label{D20}
{\rm ess} \sup_{\tau \in (0,T)} \left[ \| \vr_n(\tau, \cdot) \|_{L^1(\Omega)} +
\| \vm_n (\tau, \cdot) \|_{L^1(\Omega; R^d)} + \| S_n(\tau, \cdot) \|_{L^1(\Omega)} \right] \leq c;
\end{equation}
the sequence $\{ \vr_n, \vm_n, S_n \}_{n=1}^\infty$ admits a biting limit $[\vr, \vm, S]$,
\[
[\vr, \vm, S] \in L^\infty(0,T; L^1(\Omega; R^{d + 2})).
\]
If, moreover, $[\vr, \vm, S]$ is an admissible weak solution to the complete Euler system
specified in Definition \ref{DD1}, then
\[
\vr_n \to \vr, \ \vm_n \to \vm,\ S_n \to S \ \mbox{a.a. in}\ (0,T) \times \Omega.
\]

\end{Theorem}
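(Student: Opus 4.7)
The plan has three stages: derive uniform $L^1$ bounds from the energy, extract a biting limit, and --- in the case when this limit is an admissible weak solution --- use the conservation of total energy satisfied by the weak formulation \eqref{D8} to force convergence of the integrated energies, then convert this to pointwise convergence via the Bregman divergence \eqref{BD}.

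\emph{Uniform $L^1$ bounds and biting limit.} The bound on $\|\vr_n(\tau)\|_{L^1(\Omega)}$ is given by \eqref{D15a}. For the momentum the pointwise inequality $E(\vr_n,\vm_n,S_n) \geq \tfrac{1}{2}|\vm_n|^2/\vr_n$ together with Cauchy--Schwarz gives
\[
\intO{|\vm_n|} \leq \Bigl(\intO{\vr_n}\Bigr)^{1/2}\Bigl(\intO{\tfrac{|\vm_n|^2}{\vr_n}}\Bigr)^{1/2},
\]
which is uniformly controlled by \eqref{D15a}--\eqref{D15}. For the entropy, inverting $E_n \geq \vr_n^\gamma e^{S_n/c_v\vr_n}$ on $\{\vr_n>0\}$ produces $S_n \leq c_v \vr_n \log(E_n/\vr_n^\gamma)$, and the Gibbs-type inequality $a \log b \leq a \log a + b - a$ with $a=\vr_n$, $b=E_n$ yields $S_n \leq -(\gamma-1) c_v \vr_n \log \vr_n + c_v(E_n - \vr_n)$. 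Since $-x\log x$ is bounded above on $[0,\infty)$ (splitting at $x=1$), integration combined with \eqref{D15} produces an upper bound for $\intO{S_n}$, matching the lower bound already assumed in \eqref{D15a}. This gives \eqref{D20}, after which Chacon's biting lemma (Ball--Murat \cite{BAMU}), applied slice-wise in $\tau$, supplies a subsequence and a biting limit $[\vr,\vm,S] \in L^\infty(0,T;L^1(\Omega;R^{d+2}))$. Lower semicontinuity of the convex functional $\intO{E(\cdot)}$ under biting convergence then yields
\[
\intO{E(\vr,\vm,S)(\tau,\cdot)} \leq \liminf_{n\to\infty} \intO{E(\vr_n,\vm_n,S_n)(\tau,\cdot)} \quad \text{for a.e. } \tau.
\]

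\emph{Energy conservation and strong convergence via the Bregman divergence.} Assume now that $[\vr,\vm,S]$ is an admissible weak solution. Testing the weak energy equation \eqref{D8} with $\varphi(t,x) = \psi(t)$ approximating $\mathbf{1}_{[0,\tau]}$ (permissible because $\vm\cdot\vn|_{\partial\Omega}=0$ kills the boundary flux) delivers exact conservation of the total energy, $\intO{E(\vr,\vm,S)(\tau,\cdot)} = \intO{E(\vr_0,\vm_0,S_0)}$ for a.e. $\tau$. Combined with \eqref{D15} and the lower-semicontinuity bound, this forces
\[
\lim_{n\to\infty}\intO{E(\vr_n,\vm_n,S_n)(\tau,\cdot)} = \intO{E(\vr,\vm,S)(\tau,\cdot)}\quad\text{for a.e. } \tau.
\]
I would then insert this convergence into the Bregman divergence
\[
\mathcal{E}(U_n \,|\, V) = \intO{\bigl[E(U_n) - \xi\cdot(U_n - V) - E(V)\bigr]}, \quad \xi \in \partial E(V),
\]
with $U_n = [\vr_n,\vm_n,S_n]$ and $V = [\vr,\vm,S]$. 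This quantity is non-negative by convexity and vanishes pointwise iff $U_n = V$ by strict convexity. The difference $\intO{E(U_n)} - \intO{E(V)}$ tends to zero by the step above; the linear Bregman term is disposed of by a level-set truncation of $V$ combined with biting-weak convergence of $U_n - V$. Hence $\mathcal{E}(U_n\,|\,V) \to 0$ in $L^1(\Omega)$ for a.e. $\tau$, and a further subsequence converges a.e. in $(0,T)\times\Omega$.

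\emph{Main obstacle.} The delicate step is the control of the linear term in $\mathcal{E}(U_n\,|\,V)$, because the subdifferential $\partial E(V)$ is singular on the boundary of $\{E<\infty\}$ --- concretely near $\{\vr = 0\}$ and as $S/(c_v\vr)\to -\infty$. Two ingredients should contain this: first, the initial hypothesis $\vr_0 > 0$ together with the renormalized entropy inequality \eqref{D9} (choosing $Z$ to approximate $\mathbf{1}_{(-\infty,\underline{s})}$) yields the pointwise lower bound $S/\vr \geq \underline{s}$ for the limit, removing one of the two singular directions; second, Chacon's lemma supplies equi-integrability of $\{E(U_n)\}$ on each piece of the biting exhaustion, which controls the contribution on the small residual set where $V$ approaches the degeneracy $\vr=0$. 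The coordination of the truncation level with the biting index, performed uniformly in $\tau$ so that \eqref{D20} survives the passage to the limit, is where the technical core of the proof lies.
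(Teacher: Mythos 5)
Your derivation of the uniform $L^1$ bounds is correct and more explicit than the paper's (which instead tests the Bregman divergence at a fixed interior point $[\tilde\vr,0,\tilde S]$ and exploits $\partial E/\partial S=\vt>0$), and both routes deliver \eqref{D20}. You also correctly extract the energy equality $\intO{E(\vr,\vm,S)(\tau)}=\intO{E(\vr_0,\vm_0,S_0)}$ from \eqref{D8}. But your strong-convergence argument diverges from the paper in a way that leaves a genuine gap, and you have in fact put your finger on it yourself.

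The problem is the linear Bregman term $\intO{\xi\cdot(U_n-V)}$, $\xi\in\partial E(V)$. Under Chacon biting convergence you only have $U_n\rightharpoonup V$ weakly in $L^1(\Omega\setminus E_k)$ for each member of the exhaustion $E_k$, and $\{U_n\}$ is a priori \emph{not} equi-integrable (this is precisely what the biting lemma is designed to live without). Hence even for a bounded test function you cannot conclude $\int_\Omega \xi\cdot(U_n-V)\to 0$: the contribution $\int_{E_k}\xi\cdot U_n$ is merely bounded, not small, uniformly in $n$. On top of that $\xi$ blows up near $\{\vr=0\}$, so the truncation step you sketch would have to trade an unbounded weight against a concentration defect that the same weight lives on. The entropy minimum principle removes only the $S/\vr\to-\infty$ direction and does nothing for $\vr\to 0$. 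None of this is resolved by the ingredients you list, and I do not see how to make the plan close without reproving, in disguise, what the paper does.

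The paper instead works at the level of the Young measure $\mathcal{V}$ generated by $(\vr_n,\vm_n,S_n)$. The energy defines two defects: the concentration defect $\overline{E(\vr,\vm,S)}-\langle\mathcal V;E\rangle\ge0$ (a measure, from lower semicontinuity) and the oscillation defect $\langle\mathcal V;E\rangle-E(\vr,\vm,S)\ge0$ (from Jensen). The chain
\[
\intO{E(\vr_0,\vm_0,S_0)}\;\ge\;\int_{\overline\Omega}\!\D\overline{E(\vr,\vm,S)}(\tau)\;\ge\;\int_\Omega\langle\mathcal V_\tau;E\rangle\;\ge\;\intO{E(\vr,\vm,S)(\tau)}
\]
closes into equality by the stability bound \eqref{D15} on the left and the weak energy balance \eqref{D8} on the right, so both defects vanish. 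This never touches the subdifferential $\partial E(V)$, which is why the singular directions are harmless. The pointwise conclusion $\mathcal V_{t,x}=\delta_{(\vr,\vm,S)(t,x)}$ then comes from a sharp Jensen inequality (Lemma \ref{LK1}): for a strictly convex l.s.c. $E$ with the degeneracy property \eqref{YK1}, equality $E(\langle\nu;\cdot\rangle)=\langle\nu;E\rangle<\infty$ forces $\nu$ to be a Dirac mass or supported in $\{E=0\}$. The entropy minimum principle \eqref{C2a} (derived from \eqref{D9} and \eqref{D20b}) is used here --- and only here --- to exclude the degenerate alternative. You should adopt this two-defect/sharp-Jensen structure; it is the device that makes the strict-convexity punch line affordable when the comparison state is merely a weak solution.
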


Our second result concerns the asymptotic behavior of a consistent approximation to the isentropic Euler system on $R^d$.

\begin{Theorem}[Asymptotic limit of consistent approximation] \label{DT2}
Let $\{ \vr_n, \vm_n \}_{n=1}^\infty$
be a consistent approximation of the isentropic Euler system in $(0,T) \times R^d$ in the sense of Definition \ref{DD4}.

Then there exists a subsequence (not relabeled for simplicity) enjoying the following properties:
\begin{equation} \label{D21}
\begin{split}
(\vr_n - \vr) &\to 0 \ \mbox{weakly-(*) in}\ L^\infty(0,T; L^\gamma + L^2 (R^d)),\\
(\vm_n - \vm) &\to 0 \ \mbox{weakly-(*) in}\ L^\infty(0,T; L^{\frac{2 \gamma}{\gamma + 1}} + L^2(\Omega)),
\end{split}
\end{equation}
where
\[
{\rm ess} \sup_{\tau \in (0,T)} {E} \left( \vr, \vm \ \Big| \ \vr_\infty, \vm_\infty \right) < \infty.
\]
If, moreover, $[\vr, \vm]$ is a weak solution to the isentropic Euler system
specified in Definition \ref{DD2}, then
\[
\int_0^T \intO{ {E} \left(\vr_n, \vm_n \ \Big| \vr , \vm \right) } \dt \to 0,
\]
in particular,
\[
\begin{split}
(\vr_n - \vr) &\to 0 \ \mbox{in}\ L^q(0,T; L^\gamma + L^2 (R^d)),\\
(\vm_n - \vm) &\to 0 \ \mbox{in}\ L^q(0,T; L^{\frac{2 \gamma}{\gamma + 1}} + L^2(\Omega)),
\end{split}
\]
for any $1 \leq q < \infty$. Thus, for a suitable subsequence,
\[
\vr_n \to \vr, \ \vm_n \to \vm \ \mbox{a.a. in}\ (0,T) \times \Omega.
\]
\end{Theorem}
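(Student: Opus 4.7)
My plan is to extract a weak-$*$ limit $(\vr, \vm)$ together with a matrix-valued ``Reynolds'' defect measure $\mathfrak{R}$ from the energy bound \eqref{D18}, exploit the assumption that $(\vr, \vm)$ is itself a weak solution to force $\Div \mathfrak{R} = 0$, and then use a rigidity argument for positive semidefinite divergence-free measures on $R^d$ to conclude $\mathfrak{R} \equiv 0$. The Bregman identity then converts vanishing of $\mathfrak{R}$ into vanishing of the relative energy, and hence into the desired strong convergence.

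\emph{Weak compactness and the defect equation.} The relative energy $E(\cdot \mid \vr_\infty, \vm_\infty)$ is quadratic near $(\vr_\infty, \vm_\infty)$ and grows like $\vr^\gamma$ at infinity, so \eqref{D18} yields uniform $L^\infty_t$ bounds on $\vr_n - \vr_\infty$ in $L^\gamma + L^2$ and on $\vm_n - \vm_\infty$ in $L^{2\gamma/(\gamma+1)} + L^2$; along a subsequence I extract weak-$*$ limits $\vr, \vm$. The same bound controls the momentum flux $\frac{\vm_n \otimes \vm_n}{\vr_n} + p(\vr_n)\mathbb{I}$ in $L^\infty(0,T; \mathcal{M}(R^d; R^{d \times d}_{\mathrm{sym}}))$, and by joint convexity its weak-$*$ limit may be written as $\frac{\vm \otimes \vm}{\vr} + p(\vr)\mathbb{I} + \mathfrak{R}$ with $\mathfrak{R}(t)$ a symmetric, positive semidefinite, finite matrix-valued Borel measure for a.a.\ $t$. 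Passing to the limit in \eqref{D16}--\eqref{D17} using \eqref{D19}, the pair $(\vr, \vm)$ satisfies \eqref{D12} and a momentum balance which differs from \eqref{D13} precisely by the term $\Div \mathfrak{R}$. Under the assumption that $(\vr, \vm)$ is itself a weak solution in the sense of Definition \ref{DD2}, subtracting the two balances forces $\Div \mathfrak{R} = 0$ in $\mathcal{D}'((0,T) \times R^d)$; disintegration in time gives, for a.a.\ $t$, a positive semidefinite, finite, divergence-free matrix-valued measure $\mathfrak{R}(t)$ on $R^d$.

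\emph{Main obstacle: vanishing of the defect.} Test $\Div \mathfrak{R}(t) = 0$ against the radial vector field $\bfphi_R(x) = x\, \psi_R(|x|)$, where $\psi_R \in C^\infty_c([0,\infty))$ equals $1$ on $[0,R]$, vanishes on $[2R, \infty)$, and satisfies $|\psi_R'| \leq C/R$. Since $\partial_i (\bfphi_R)_j = \delta_{ij}\psi_R(|x|) + x_i x_j \psi_R'(|x|)/|x|$, integration by parts gives
\[
\int_{R^d} \psi_R(|x|)\, \mathrm{tr}\,\mathfrak{R}(t) \dx = -\int_{R^d} \frac{\psi_R'(|x|)}{|x|}\, x^\top \mathfrak{R}(t)\, x \dx.
\]
Positive semidefiniteness yields $0 \leq x^\top \mathfrak{R}(t) x \leq |x|^2\, \mathrm{tr}\,\mathfrak{R}(t)$, so the right-hand side is dominated in absolute value by $\frac{C}{R^2} \int_{R \leq |x| \leq 2R} |x|^2 \, d(\mathrm{tr}\,\mathfrak{R}(t)) \leq 4C\, (\mathrm{tr}\,\mathfrak{R}(t))(\{|x| \geq R\})$, which tends to $0$ as $R \to \infty$ by the finite total variation of $\mathfrak{R}(t)$. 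Thus $\int_{R^d} \mathrm{tr}\,\mathfrak{R}(t) = 0$, and since $\mathfrak{R}(t) \geq 0$, we conclude $\mathfrak{R}(t) \equiv 0$. The two indispensable ingredients are the positive semidefiniteness of $\mathfrak{R}(t)$ and the finiteness of its total variation on the whole of $R^d$.

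\emph{Strong convergence via the Bregman divergence.} With $\mathfrak{R} \equiv 0$, both the convective defect $\overline{\vm_n \otimes \vm_n/\vr_n} - \vm \otimes \vm/\vr$ and the scalar pressure defect $\overline{p(\vr_n)} - p(\vr)$ vanish (each contributes a positive semidefinite summand and their sum is zero). The Legendre relation $p = \vr P' - P$ together with the weak convergence $\vr_n \rightharpoonup \vr$ then propagates vanishing to the pressure potential, $\overline{P(\vr_n)} = P(\vr)$, so the energy defect $\overline{E(\vr_n, \vm_n)} - E(\vr, \vm)$ also vanishes. The Bregman identity
\[
E(\vr_n, \vm_n \mid \vr, \vm) = E(\vr_n, \vm_n) - E(\vr, \vm) - (\vr_n - \vr)\partial_\vr E(\vr, \vm) - (\vm_n - \vm) \cdot \partial_\vm E(\vr, \vm),
\]
combined with the weak convergences $\vr_n \rightharpoonup \vr$, $\vm_n \rightharpoonup \vm$, then yields $\int_0^T \int_{R^d} E(\vr_n, \vm_n \mid \vr, \vm)\, \dxdt \to 0$. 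Coercivity of the relative energy ($\gamma$-power in $\vr$ at infinity, quadratic in $\vm/\vr - \vu$ weighted by $\vr$) upgrades this to strong convergence in the Lebesgue spaces stated in the theorem, and to a.e.\ convergence along a further subsequence.
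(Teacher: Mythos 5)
Your overall strategy is the same as the paper's: extract a symmetric positive semidefinite matrix-valued defect measure from the momentum flux, show that the hypothesis that $[\vr,\vm]$ is a weak solution forces $\Div \mathbb{D}(t)=0$, kill the defect by a Liouville-type rigidity argument exploiting positive semidefiniteness and finite total variation, and finally upgrade to strong convergence. Your radial-cutoff proof of the rigidity step is a clean and correct alternative to the paper's Proposition \ref{SP1}, which instead extends the test-function class to Lipschitz fields and then plugs in $\bfphi(x)=\xi(\xi\cdot x)$; the two arguments are essentially equivalent, and yours arrives at $\int {\rm trace}\,\mathbb{D}=0$ in one stroke.

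There is, however, a genuine gap in your final ``Strong convergence via the Bregman divergence'' step. You propose to pass to the limit in the identity
\[
E(\vr_n,\vm_n\,|\,\vr,\vm)=E(\vr_n,\vm_n)-E(\vr,\vm)-(\vr_n-\vr)\,\partial_\vr E(\vr,\vm)-(\vm_n-\vm)\cdot\partial_\vm E(\vr,\vm)
\]
using only the weak-$*$ convergences $\vr_n-\vr\rightharpoonup 0$ in $L^\infty_t(L^\gamma+L^2)_x$ and $\vm_n-\vm\rightharpoonup 0$ in $L^\infty_t(L^{2\gamma/(\gamma+1)}+L^2)_x$. This requires $\partial_\vr E(\vr,\vm)=-\tfrac12|\vm|^2/\vr^2+P'(\vr)$ and $\partial_\vm E(\vr,\vm)=\vm/\vr$ to lie in the corresponding dual spaces $L^1_t(L^{\gamma'}\cap L^2)_x$ and $L^1_t(L^{(2\gamma/(\gamma+1))'}\cap L^2)_x$, which is false for a generic weak solution: the energy bound controls $\sqrt{\vr}\,\vu=\vm/\sqrt{\vr}$ in $L^2$ but gives no control of $\vu=\vm/\vr$ or of $|\vm|^2/\vr^2$ near the vacuum set $\{\vr=0\}$, and the Bregman subgradient is not even single-valued there. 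The paper sidesteps this entirely: from $\overline{E(\vr_n,\vm_n|\vr_\infty,\vm_\infty)}=E(\vr,\vm|\vr_\infty,\vm_\infty)$ it first derives convergence of the relevant integrals over arbitrary Borel sets (using convexity), which together with weak convergence and uniform convexity of $L^\gamma$ yields strong convergence of $\vr_n$ on compacts, then uses Prokhorov tightness to globalize, then identifies and strengthens the limit of $\vm_n/\sqrt{\vr_n}$, and only at the very end obtains $\int_0^T\int E(\vr_n,\vm_n|\vr,\vm)\to 0$ via Vitali. Your argument inverts this order and relies on a pairing that is not available.

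A secondary, fixable issue: when $\vr_\infty>0$ the flux $\vm_n\otimes\vm_n/\vr_n+p(\vr_n)\mathbb{I}$ tends to a nonzero constant at infinity and is therefore \emph{not} bounded in $L^\infty(0,T;\mathcal{M}(R^d;R^{d\times d}_{\rm sym}))$. One must instead work with the relative flux, as the paper does with $\mathbb{C}_n$ and with $P(\vr_n)-P'(\vr_\infty)(\vr_n-\vr_\infty)-P(\vr_\infty)$; this is precisely what guarantees $\mathbb{D}(t)$ is a \emph{finite} measure, which your rigidity argument correctly identifies as indispensable.
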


We point out that the results stated in Theorems \ref{DT1}, \ref{DT2} require extracting a suitable subsequence. In both cases,
the convergence is necessarily strong (pointwise a.a.) as soon as the limit is an admissible weak solution to the
system.

\section{Convergence of stable approximations to the full Euler system}
\label{C}

Our goal is to prove Theorem \ref{DT1}. We start by establishing uniform bounds for the stable approximation. 

\subsection{Uniform bounds}

We establish the uniform bounds claimed in \eqref{D20}. To see this, we choose an arbitrary point $[\tvr, 0, \tvS] \in R^{d + 2}$, $\tvr > 0$, and consider
the quantity
\[
\begin{split}
0 &\leq
E(\vr_n, \vm_n, S_n) - \frac{\partial E(\tvr, 0, \tvS)}{\partial \vr}
(\vr_n - \tvr) - \frac{\partial E(\tvr, 0, \tvS)}{\partial \vm} \cdot (\vm_n - \tvm) -
\frac{\partial E(\tvr, 0, \tvS)}{\partial S}  (S_n - \tvS) \\ &-
E(\tvr, 0, \tvS) = \frac{1}{2} \frac{|\vm_n|^2}{\vr_n} + \vr_n e(\vr_n, S_n)
- \frac{\partial (\vr e) (\tvr,\tvS)}{\partial \vr}(\vr - \tvr) -
\frac{\partial (\vr e) (\tvr,\tvS)}{\partial S}(S - \tvS) - \tvr e(\tvr, \tvS).
\end{split}
\]
Seeing that $\frac{\partial E}{\partial S} = \vt > 0$, we conclude
\[
\begin{split}
\int_\Omega &\Big[ E(\vr_n, \vm_n, S_n) - \frac{\partial E(\tvr, 0, \tvS)}{\partial \vr}
(\vr_n - \tvr) - \frac{\partial E(\tvr, 0, \tvS)}{\partial \vm} \cdot (\vm_n - \tvm) -
\frac{\partial E(\tvr, 0, \tvS)}{\partial S} (S_n - \tvS)
\\
&- E(\tvr, 0, \tvS) \Big] \dx \leq c(\tvr,  \tvS) \left(1 + \intO{E(\vr_n, \vm_n, S_n)} + \intO{ \vr_n } -
\intO{ S_n }    \right) \\ &\leq c(\tvr,  \tvS) \left(1 + \intO{E(\vr_{0}, \vm_{0}, S_{0})} + M -
\underline{S} +e_n \right)
\end{split}
\]
As $E$ is strictly convex at $[\tvr, 0, \tvS]$, we have
\[
\begin{split}
E(\vr_n, \vm_n, S_n) - \frac{\partial E(\tvr, 0, \tvS)}{\partial \vr}
(\vr_n - \tvr) &- \frac{\partial E(\tvr, 0, \tvS)}{\partial \vm} \cdot (\vm_n - \tvm) -
\frac{\partial E(\tvr, 0, \tvS)}{\partial S}  (S_n - \tvS) \\ &-
E(\tvr, 0, \tvS) \ageq |\vr_n - \tvr| + |\vm_n | + |S_n - \tvS|
\end{split}
\]
as soon as
\[
|\vr_n - \tvr| + |\vm_n | + |S_n - \tvS| \geq 1.
\]
Since $\Omega$ is bounded, the estimates \eqref{D20} follow.

\subsection{Strong convergence}

We shall systematically extract various subsequence keeping the labeling of the original sequence. In view of \eqref{D15}, \eqref{D20}, 
the sequence $\{ \vr_n, \vm_n, S_n \}_{n = 1}^\infty$ generates a Young measure 
\[
\mathcal{V} \in L^\infty_{{\rm weak}-(*)} ((0,T) \times \Omega; \mathcal{P}(R^{d + 2})), \ R^{d + 2} = \left\{ (\tvr, \tvm, \tvS) \in R^{d + 2} \right\}.
\] 
Moreover, $\mathcal{V}_{t,x}$ possesses finite first moments for a.a. $(t,x)$ and we can set 
\[
\vr(t,x) = \left< \mathcal{V}_{t,x}; \tvr \right>, \ \vm(t,x) = \left< \mathcal{V}_{t,x}; \tvm \right>,\ S(t,x) = \left< \mathcal{V}_{t,x}; \tvS \right>. 
\]
As observed by Ball and Murat \cite{BAMU}, the trio $[\vr, \vm, S]$ corresponds to the biting limit of the sequence $\{ \vr_n, \vm_n, S_n \}_{n = 1}^\infty$. 
Finally, in view of the energy bound \eqref{D15}, we have 
\[
E(\vr_n, \vm_n, S_n)  \to \Ov{E(\vr, \vm, S)} \ \mbox{weakly-(*) in}\ L^\infty_{w^*}(0,T; \mathcal{M}^+(\Ov{\Omega})),
\]
where the symbol $\mathcal{M}^+$ denotes the set of non--negative Borel measures. In view of the hypothesis \eqref{D15}, 
\begin{equation} \label{C1a}
\intO{ E(\vr_0, \vm_0, S_0) } \geq \int_{\Ov{\Omega}} \D \Ov{E(\vr, \vm, S)}(\tau, \cdot),
\end{equation}
and
\begin{equation} \label{C1} 
\Ov{E(\vr, \vm, S)}(\tau, \cdot) \geq \left< \mathcal{V}_{\tau, \cdot} ; E(\tvr, \tvm, \tvS) \right> \geq E(\vr, \vm, S)(\tau, \cdot) \ \mbox{for a.a.}\ \tau \in (0,T)
\end{equation}
in the sense of non--negative measures on $\Ov{\Omega}$. Note that the first inequality in 
\eqref{C1} follows from lower semi--continuity of the energy, while the second one 
follows from its convexity, see e.g. \cite[Section 3.2]{FeiLukMizSheWa}. In particular, the biting limit $[\vr, \vm, S]$ belongs to the class 
\[
[\vr, \vm, S] \in L^\infty(0,T; L^1(\Omega; R^{d + 2})).
\]

Finally, suppose that $[\vr, \vm, S]$ is an admissible weak solution of the Euler system in the sense of Definition \ref{DD1}. In particular, the total energy balance 
\eqref{D8} holds; whence 
\begin{equation} \label{C2}
\intO{ E(\vr, \vm, S) (\tau, \cdot) } = \intO{ E(\vr_0, \vm_0, S_0) } \ \mbox{for any}\ 0 \leq \tau \leq T.
\end{equation}
Moreover, as the entropy equation \eqref{D9} is satisfied in the renormalized sense, we can deduce from the hypothesis \eqref{D20b}
the entropy minimum principle,
\begin{equation} \label{C2a}
S(t,x) \geq \vr(t,x) \underline{s} \ \mbox{for a.a.}\ (t,x),
\end{equation}
see \cite{BreFei17}.

Going back to \eqref{C1} we conclude 
\begin{equation} \label{C3}
\begin{split}
\intO{ E(\vr_0, \vm_0, S_0) } &= \int_{\Ov{\Omega}} \D \Ov{E(\vr, \vm, S) }(\tau, \cdot), \\  
\Ov{E(\vr, \vm, S) } &= \left< \mathcal{V} ; E(\tvr, \tvm, \tvS) \right>  = 
E(\vr, \vm, S). 
\end{split}
\end{equation}
The second equality, specifically, 
\[
\Ov{E(\vr, \vm, S) }  = 
\left< \mathcal{V} ; E(\tvr, \tvm, \tvS) \right> 
\]
means that the concentration defect associated to the sequence $\{ E(\vr_n, \vm_n, S_n) \}_{n=1}^\infty$ vanishes, specifically, 
\[
E(\vr_n, \vm_n, S_n) \to \left< \mathcal{V} ; E(\tvr, \tvm, \tvS) \right> = 
E(\vr, \vm, S) \ \mbox{weakly in}\ L^1((0,T) \times \Omega), 
\]
cf. \cite{FeiLukMizSheWa}.

The third equality, together with \eqref{C2a}, implies the desired pointwise convergence. To see this, we need the following result that may be of independent interest.  

\begin{Lemma} [Sharp form of Jensen's inequality] \label{LK1}

Suppose that $E: R^m \to [0, \infty]$ is an l.s.c. convex function satisfying: 
\begin{itemize} 
\item 
$E$ is strictly convex on its domain of positivity, meaning for any $y_1, y_2 \in R^m$ such that 
$0 < E(y_1) < \infty$, $E(y_2) < \infty$, $y_1 \ne y_2$, we have 
\[
E \left( \frac{y_1 + y_2}{2} \right) < \frac{1}{2} E(y_1) + \frac{1}{2} E(y_2).
\]

\item If $y \in \partial {\rm Dom}[E]$, then either $E(y) = \infty$ or $E(y) = 0$, in other words, 
\begin{equation} \label{YK1}
E(y) =  0 \ \mbox{whenever}\ y \in {\rm Dom}[E] \cap \partial {\rm Dom}[E].
\end{equation}
Let $\nu \in \mathcal{P}[R^m]$ be a (Borel) probability measure with finite first moment satisfying 
\begin{equation} \label{YK2}
E (\left< \nu; \widetilde{y} \right>)  = \left< \nu ; E (\widetilde{y}) \right> < \infty. 
\end{equation}

Then 
{\bf (i)} either
\[
\nu = \delta_Y, \ Y = \left< \nu; \widetilde{y} \right> \in {\rm Dom}[E], \ E(Y) > 0,
\]

{\bf (ii)} or 
\[
{\rm supp} [\nu] \subset \left\{ y \in R^m \ \Big| \ E(y) = 0 \right\}.
\]

\end{itemize}

\end{Lemma}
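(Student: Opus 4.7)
The plan is to set $Y := \langle \nu ; \tilde{y} \rangle$ and split the argument into two cases according to the sign of $E(Y)$. Standard Jensen's inequality, valid under lower semi--continuity and convexity of $E$, gives $E(Y) \leq \langle \nu ; E(\tilde y) \rangle < \infty$, so in either case $Y \in \mathrm{Dom}[E]$.

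If $E(Y) = 0$, hypothesis \eqref{YK2} forces $\langle \nu ; E(\tilde y) \rangle = 0$; since $E \geq 0$, I deduce $E(\tilde y) = 0$ for $\nu$--almost every $\tilde y$, which is exactly alternative (ii). Suppose now $E(Y) > 0$. The first step is to observe that the boundary hypothesis \eqref{YK1} excludes $Y \in \mathrm{Dom}[E] \cap \partial \mathrm{Dom}[E]$, so $Y$ lies in the interior of $\mathrm{Dom}[E]$ and by standard convex analysis the subdifferential $\partial E(Y)$ is non--empty. Pick any $\xi \in \partial E(Y)$: the Bregman--type inequality
\[
E(\tilde y) - E(Y) - \xi \cdot (\tilde y - Y) \geq 0
\]
integrated against $\nu$ and combined with the equality in \eqref{YK2} forces $E(\tilde y) = E(Y) + \xi \cdot (\tilde y - Y)$ for $\nu$--a.a.\ $\tilde y$, so $\nu$ is concentrated on the contact set $B$ of $E$ with its affine tangent at $Y$.

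The second step is a two--point rigidity argument: if $\nu \neq \delta_Y$, then $\nu(B \setminus \{ Y \}) > 0$, so I may select $y_1 \in B \setminus \{ Y \}$ (automatically $E(y_1) < \infty$), and strict convexity of $E$ --- legitimate precisely because $0 < E(Y) < \infty$, $E(y_1) < \infty$, and $Y \neq y_1$ --- gives
\[
E\Bigl( \tfrac{1}{2}(Y + y_1) \Bigr) < \tfrac{1}{2} E(Y) + \tfrac{1}{2} E(y_1) = E(Y) + \tfrac{1}{2} \xi \cdot (y_1 - Y),
\]
whereas the tangent inequality at $Y$ supplies the reverse inequality, a contradiction. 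Hence $\nu = \delta_Y$, which is alternative (i). The only genuine obstacle is ensuring $\partial E(Y) \neq \emptyset$ in the positive case: this is precisely the role of hypothesis \eqref{YK1}, which prevents $Y$ from being trapped on the boundary of the effective domain where the subgradient could degenerate. Everything else is a classical ``equality case in Jensen's inequality'' comparison, specialised to the domain of positivity where strict convexity is available.
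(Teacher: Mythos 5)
Your proof is correct and follows essentially the same route as the paper's: split according to the sign of $E(Y)$ where $Y=\langle\nu;\tilde y\rangle$, use the boundary hypothesis \eqref{YK1} to place $Y$ in the interior of the domain when $E(Y)>0$, pick a subgradient $\xi\in\partial E(Y)$, integrate the Bregman inequality against $\nu$, and invoke strict convexity to collapse the contact set to $\{Y\}$. Your explicit two--point rigidity argument simply fleshes out the step the paper asserts more tersely (``the above inequality must be sharp''), so the two proofs are in substance identical.
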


\begin{proof}

First observe that, obviously, $\left< \nu; \widetilde{u} \right> \in {\rm Dom}[E]$, and, by virtue of \eqref{YK2} 
and positivity of $E$, 
\[
\nu \left\{ R^m \setminus {\rm Dom}[E] \right\} = 0.
\]  

{\bf (i)} Suppose first that $Y \equiv \left< \nu; \widetilde{y} \right> \in {\rm int}[ {\rm Dom}[E]]$, $E(Y) > 0$. Then there exists 
\[
\Lambda \in \partial E ( Y ) 
\]
such that 
\[
E(y) \geq E (Y) + \Lambda \cdot (y - Y) 
\ \mbox{for any}\ y \in R^m. 
\]
As $E$ is strictly convex in ${\rm Dom}[E] \cap \{ E > 0 \}$, however, we claim that the above inequality must be sharp:
\[
E(y) - E (Y) - \Lambda \cdot (y -  Y) > 0 
\ \mbox{for all} \ y \in R^d, \ y \ne Y. 
\] 
Now it follows from \eqref{YK2} that 
\[
\Big< \nu; E(\widetilde{y}) - E (Y) 
- \Lambda \cdot (\widetilde{y}  - Y )\Big> = 0
\]
which yields the desired conclusion {\bf (i)}. 

\medskip

{\bf (ii)} Suppose that $Y = \left< \nu; \widetilde{y} \right> \in {\rm Dom}[E] \cap \partial {\rm Dom}[E]$ 
or $E(Y) = 0$. In accordance with the hypothesis \eqref{YK1}, we have in both cases
\[
E(Y) = 0.
\]
Consequently, we get from \eqref{YK2}, 
\[
\left< \nu; E(\widetilde{y}) \right> = 0 
\] 
which implies that $\nu$ is supported by zero points of $E$ as $E \geq 0$ which is the alternative {\bf (ii)}.

\end{proof}

In accordance with \eqref{C3}, 
\[
\left< \mathcal{V} ; E(\tvr, \tvm, \tvS) \right> (t,x)  = 
E(\vr, \vm, S)(t,x)\ \mbox{for a.a.}\ (t,x).
\] 
Clearly, $E$ satisfies the hypotheses of Lemma \ref{LK1}; whence either 
$\mathcal{V}_{t,x}$ is a Dirac mass, specifically, 
\begin{equation} \label{C7}
\mathcal{V}_{t,x} = \delta_{\vr(t,x), \vm(t,x), S(t,x)},
\end{equation}
or 
\[
{\rm supp} [\mathcal{V}_{t,x} ] \subset \left\{ \tvr = 0, \ \tvm = 0, \ \tvS \leq 0 \right\},  
\] 
which, combined with \eqref{C2a}, yields again \eqref{C7}. Indeed \eqref{C2a} 
means that the barycenter of $\mathcal{V}_{t,x}$ is located above the line $\tvS = \tvr \underline{s}$.
As the Young measure is a Dirac mass, we conclude the 
sequence $\{ \vr_n, \vm_n, S_n \}_{n=1}^\infty$ converges in measure; whence a suitable subsequence converges a.a. 
We have proved Theorem \ref{DT1}.  

\section{Convergence of consistent approximations to the isentropic Euler system}
\label{S}

Our goal is to show Theorem \ref{DT2}. It turns out the proof is more complicated than that of Theorem \ref{DT1} as 
the weak solution satisfies merely the field equations \eqref{D12}, \eqref{D13}.

\subsection{Turbulent defect measures}
\label{A}

In the following, we pass several times to suitable subsequences in the vanishing viscosity sequence without explicit relabeling. However, it is easy to see that it is enough to show the conclusion of Theorem \ref{DT2} for a subsequence once  
the limit $[\vr,\vm]$ has been fixed.

It follows from the bounds imposed by the
energy inequality \eqref{D18} that we may suppose
\[
(\vr_n - {\vr}_\infty) \to (\vr - {\vr}_\infty) \ \mbox{weakly-(*) in}\ L^\infty (0,T; (L^\gamma + L^2) (R^d)),
\]
\begin{equation}\label{eq:M1}
(\vm_n - \vm_\infty) \to (\vm - \vm_\infty) \ \mbox{weakly-(*) in}\  L^\infty (0,T; (L^{\frac{2 \gamma}{\gamma + 1}}+L^{2})(R^d; R^d)).
\end{equation}
In particular, we get \eqref{D21}. Indeed,  
as the total energy is $E(\vr, \vm)$ is a strictly convex function of $(\vr, \vm)$, it is easy to check that
\begin{equation}\label{eq:eq}
\begin{split}
E(\vr, \vm|\vr_{\infty},\vm_{\infty})& \ageq  (\vr - {\vr}_\infty)^2 + (\vm - \vm_\infty)^2 \ 
\ \mbox{for}\ \frac12 \vr_\infty \leq \vr \leq 2 {\vr}_\infty, \ \ \frac12 |\vm_\infty| \leq |\vm| \leq 2 |{\vm}_\infty| , \\ 
&\ageq 1 + \vr^\gamma 
+ \frac{|\vm|^2}{\vr} \ \mbox{otherwise};
\end{split}
\end{equation}
whence the desired bounds follow from the energy inequality \eqref{D21}.

\subsubsection{Internal energy  and pressure defect}

Next,
recall that the sequence
\[
0 \leq
P(\vr_n) - P'({\vr}_\infty) (\vr_n - {\vr}_\infty) - P({\vr}_\infty),\ n=1,2,\dots,
\]
is bounded in $L^\infty(0,T; L^1 (R^d))$ uniformly in $n$ by \eqref{D21}.
It holds
$$L^\infty(0,T; L^1 (R^d))\subset L^{\infty}_{w^{*}} (0, T, \mathcal{M} (R^d)),$$ where the symbol $\mathcal{M}(R^{d})$ denotes the set of  finite Borel measures on $R^d$ and $L^{\infty}_{w^{*}}(0,T;\mathcal{M}(R^{d}))$ stands for the space of weak-(*)-measurable mappings $\nu:[0,T]\to\mathcal{M}(R^{d})$ such that
$$
{\rm ess} \sup_{\tau \in[0,T]}\|\nu (\tau) \|_{\mathcal{M}(R^{d})}<\infty.
$$
In addition, $L^{\infty}_{w^{*}} (0, T, \mathcal{M} (R^d))$ is the dual of $L^1 (0, T, C_0 (R^d))$ hence passing   to a suitable subsequence as the case may be, there is $\mathcal{P}\in L^\infty_{w^{*}}(0,T; \mathcal{M}(R^d))$ such that
\[
P(\vr_n) - P'({\vr}_\infty) (\vr_n - {\vr}_\infty) - P({\vr}_\infty) \to \mathcal{P}
\ \mbox{weakly-(*) in}\ L^\infty_{w^{*}}(0,T; \mathcal{M}(R^d)).
\]
As the function $P$ is convex and the approximate internal energies are non--negative, we  deduce by weak lower semicontinuity that
\[
\mathfrak{R}_e \equiv \mathcal{P} - \left[ P(\vr) - P'({\vr}_\infty) (\vr - {\vr}_\infty) - P({\vr}_\infty) \right]
\in L^\infty_{w^{*}}(0,T; \mathcal{M}^+(R^d)),
\]
where $\mathcal{M}^+(R^d)$ denotes the set of non--negative finite Borel measures on $R^{d}$. This defines  the internal energy defect measure $\mathfrak{R}_e$. It is important to note that
\begin{equation} \label{A1}
\begin{split}
\int_0^T \int_{R^{d}}\psi(t) \varphi(x)\, \D \mathfrak{R}_e(t) \dt  &= \lim_{n \to \infty}
\int_0^T \intO{ \psi(t) \varphi(x) \left( P(\vr_n) - P(\vr) \right) } \dt \\ &\mbox{for any}\
\psi \in L^1(0,T), \ \varphi \in C_c(R^d),
\end{split}
\end{equation}
which will be used later.

\subsubsection{Viscosity defect}

Writing
\[
\mathbb{C}_n \equiv  
{\bf 1}_{\vr_n > 0} \left[ \frac{\vm_n \otimes \vm_n}{\vr_n} - \bu_\infty \otimes \vm_n - \vm_n \otimes \bu_\infty + \vr_n \bu_\infty \otimes \bu_\infty \right]
\] 
we obtain the existence of $\mathbb{C} \in L^\infty_{w^{*}} (0,T; \mathcal{M}^+(R^d; R^{d \times d}_{\rm sym}))$, where  $\mathcal{M}^+(R^d;R^{d \times d}_{\rm sym})$ is the set of finite symmetric positive semidefinite matrix--valued (signed) Borel measures, such that
\[
\mathbb{C}_n \to \mathbb{C} \ \mbox{weakly-(*) in}\
L^\infty_{w^{*}} (0,T; \mathcal{M}^+(R^d; R^{d \times d}_{\rm sym})).
\]
More specifically,
each component $C_{i,j}$ is a finite signed measure on $R^d$, $C_{i,j} = C_{j,i}$, and
\begin{equation} \label{eq:eq2}
\mathbb{C}(t) : (\xi \otimes \xi) \in \mathcal{M}^+(R^d) \ \mbox{for any}\ \xi \in R^d
\ \mbox{and a.a.}\ t \in (0,T).
\end{equation}

The viscosity defect measure is then defined by
\[
\mathfrak{R}_v \equiv \mathbb{C} - {\bf 1}_{\vr > 0} \left[ \frac{ \vm \otimes \vm }{\vr} - \bu_\infty \otimes \vm - \vm \otimes \bu_\infty + \vr \bu_\infty \otimes \bu_\infty\right] \in
L^\infty_{w^{*}}(0,T; \mathcal{M}(R^d; R^{d \times d}_{\rm sym})).
\]
Now, a simple but crucial observation is that the $\mathfrak{R}_v$ is positive semidefinite. To see this, we compute
\[
\begin{split}
\mathfrak{R}_v : (\xi \otimes \xi) &=
\lim_{n \to \infty} {\bf 1}_{\vr_n > 0} \frac{ \vm_n \otimes \vm_n }{\vr_n} :(\xi \otimes \xi)
- {\bf 1}_{\vr > 0} \frac{ \vm \otimes \vm }{\vr}:(\xi \otimes \xi)\\
& =
\lim_{n \to \infty} \frac{|\vm_n \cdot \xi|^2}{\vr_n} - \frac{|\vm \cdot \xi|^2}{\vr} \ \mbox{in}\ \mathcal{D}'((0,T) \times B) 
\end{split}
\]
for any bounded ball $B \subset R^d$;
whence the desired conclusion follows from the weak lower semicontinuity of the convex function $[\vr,\vm]\mapsto \frac{|\vm\cdot\xi|^{2}}{\vr}$, $\xi\in R^{d}$. We conclude that
\[
\mathfrak{R}_v \in L^\infty_{w^{*}}(0,T; \mathcal{M}^+(R^d; R^{d \times d}_{\rm sym})).
\]

Finally, similarly to \eqref{A1}, we note that
\begin{equation} \label{A1A}
\begin{split}
&\int_0^T \int_{R^{d}}\psi(t) \bfphi(x): \D \mathfrak{R}_v(t) \dt \\
&\qquad = \lim_{n \to \infty}
\int_0^T \intO{ \psi(t) \bfphi(x) : \left( {\bf 1}_{\vr_n > 0} \frac{\vm_n \otimes \vm_n}{\vr_n} - {\bf 1}_{\vr > 0}\frac{\vm \otimes \vm}{\vr} \right) } \dt \\ &\quad\mbox{for any}\
\psi \in L^1(0,T), \ \bfphi \in C_c(R^d; R^{d \times d})).
\end{split}
\end{equation}

\subsubsection{Total defect}

We introduce the \emph{total defect measure}
\begin{equation}\label{eq:d1}
\mathbb{D} \equiv \mathfrak{R}_v + (\gamma - 1) \mathfrak{R}_e \mathbb{I} \in
L^\infty_{w^{*}}(0,T; \mathcal{M}^+ (R^d; R^{d \times d}_{\rm sym} )),
\end{equation}
which describes the defect in the momentum equation.
Moreover, we get for the total energy
\begin{equation} \label{A3}
E(\vr_n, \vm_n|\vr_{\infty},\vm_{\infty})\to E(\vr, \vm|\vr_{\infty},\vm_{\infty}) 
+ \frac{1}{2} {\rm trace} [\mathfrak{R}_v] + \mathfrak{R}_e
\end{equation}
weakly-(*) in $L^\infty_{w^{*}}(0,T; \mathcal{M}^+(R^d; R^{d \times d}_{\rm sym}))$. In other words, we have a precise relation of the defect in the momentum equation and the defect of the energy. Finally, we get from \eqref{A3} that
\begin{equation*}
\begin{split}
\int_0^T &\intO{\psi (t)\varphi(x)
\left( \frac{1}{2} \frac{|\vm_n|^2}{\vr_n} - \frac{1}{2} \frac{|\vm|^2}{\vr}  + P(\vr_n)  - P({\vr})
\right)  }\dt  \\ &\to
\int_0^T \psi (t)  \varphi(x) \,\D\left( \frac{1}{2} {\rm trace} [\mathfrak{R}_v(t)] + \mathfrak{R}_e(t)\right) \dt
\end{split}
\end{equation*}
for any $\psi \in L^1(0,T)$ and any $\varphi \in C_c(R^d)$.

\subsubsection{Bounded domain}
The above construction of the turbulent defect measure $\mathbb{D}$ as well as the proof of its properties can be carried out the same way on a  bounded domain $\Omega\subset R^{d}$, while using the dualities
$$
L^{1}(0,T;C(\overline{\Omega}))^{*}\cong L^{\infty}_{w^{*}}(0,T;\mathcal{M}(\overline{\Omega})) \ \mbox{and}\ L^{1}(0,T;C_{0}(\overline{\Omega};R^{d\times d}))^{*}\cong L^{\infty}_{w^{*}}(0,T;\mathcal{M}(\overline{\Omega};R^{d\times d})),
$$
respectively, where $\mathcal{M}(\overline{\Omega})$ is  the set of bounded Borel measures on $\overline{\Omega}$ (and similarly for the matrix--valued case).

\subsection{Asymptotic limit}

Using \eqref{A1}, \eqref{A1A} we may perform the asymptotic limit in the momentum equation \eqref{D17} obtaining
\begin{equation} \label{S1}
\begin{split}
\int_0^T &\intO{ \Big[ \partial_t \psi \vm \cdot \bfphi   + \psi {\bf 1}_{\vr > 0} \frac{\vm \otimes \vm}{\vr} : \Grad \bfphi
+ \psi p(\vr) \Div \bfphi  \Big] } \dt \\ &= -
\int_0^T \psi \Big[ \Grad \bfphi :\D\mathfrak{R}_v(t) + (\gamma - 1)  \Div \bfphi\, \D\mathfrak{R}_e(t) \Big]\dt
\\
&\mbox{for any}\ \psi \in C^1_c(0,T), \ \bfphi \in C^1_c(R^d; R^d).
\end{split}
\end{equation}

Thus, if the limit is a weak solution of the Euler system, then the left hand side of \eqref{S1} vanishes. Hence, in view of the definition of  the total defect measure \eqref{eq:d1}, we obtain
\begin{equation*}
\int_{R^d} \Grad \bfphi : \D \mathbb{D}(t) = 0 \ \mbox{for any}\ \varphi \in C^1_c(R^d; R^d)\ \mbox{for a.a.}\ t \in (0,T)
\end{equation*}
which is nothing else than \eqref{def1}. 

\subsubsection{Equation $\Div \mathbb{D} = 0$ in $R^d$}

The following result, which can be regarded as a version of Liouville's theorem, is crucial in the proof of Theorem \ref{DT2}.
\begin{Proposition} \label{SP1}
Let $\mathbb{D} \in \mathcal{M}^+ (R^d; R^{d \times d}_{\rm sym})$ satisfy
\begin{equation} \label{S4}
\int_{R^d} \Grad \bfphi : \D \mathbb{D} = 0 \ \mbox{for any}\ \varphi \in C^1_c(R^d; R^d).
\end{equation}

Then $\mathbb{D} \equiv 0$.
\end{Proposition}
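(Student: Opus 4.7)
The plan is to test the divergence--free condition \eqref{S4} against vector fields of the form $\bfphi^{i,j}(x) = \eta_R(x)\, x_i\, \vc{e}_j$, where $\eta_R(x)=\eta(x/R)$ for a fixed cutoff $\eta\in C^\infty_c(R^d)$ with $\eta\equiv 1$ on $B_1$ and $0\le\eta\le 1$, $\{\vc{e}_j\}$ is the standard basis of $R^d$, and $i,j\in\{1,\dots,d\}$. Such fields lie in $C^1_c(R^d;R^d)$ for every finite $R$, yet grow linearly inside $B_R$; this linear growth is precisely what is needed to capture the full mass of a finite Borel measure in the limit $R\to\infty$.

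A direct calculation gives $(\nabla\bfphi^{i,j})_{lk} = \delta_{lj}\bigl(\eta_R\, \delta_{ki} + x_i\, \partial_k \eta_R\bigr)$, so \eqref{S4} reduces to the identity
\begin{equation*}
\int_{R^d} \eta_R \, \D \mathbb{D}_{ji} + \sum_{k=1}^d \int_{R^d} x_i\, \partial_k \eta_R \, \D \mathbb{D}_{jk} = 0.
\end{equation*}
I would now send $R\to\infty$. For the first term, $|\eta_R|\le 1$ and $\eta_R(x)\to 1$ pointwise, so as $|\mathbb{D}_{ji}|(R^d)<\infty$, dominated convergence yields $\int \eta_R \, \D \mathbb{D}_{ji} \to \mathbb{D}_{ji}(R^d)$. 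For the second (error) term, the crucial point is that $\partial_k\eta_R(x) = R^{-1}(\partial_k\eta)(x/R)$ is supported in an annulus where $|x_i|\le CR$, so that
\begin{equation*}
|x_i\, \partial_k\eta_R(x)| \le C\|\nabla\eta\|_{L^\infty}
\end{equation*}
uniformly in $R$. Since moreover $x_i\, \partial_k\eta_R(x)\to 0$ pointwise and $|\mathbb{D}_{jk}|(R^d)<\infty$, dominated convergence again forces this contribution to vanish, and we deduce $\mathbb{D}_{ji}(R^d)=0$ for every $i,j$.

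At this point positive semidefiniteness closes the argument. The diagonal entries $\mathbb{D}_{ii}$ are non--negative Borel measures of zero total mass, hence $\mathbb{D}_{ii}\equiv 0$. A matrix Cauchy--Schwarz inequality applied to the positive semidefinite value $\mathbb{D}(A)$ then gives $|\mathbb{D}_{ij}(A)|^2\le \mathbb{D}_{ii}(A)\, \mathbb{D}_{jj}(A)=0$ for every Borel set $A\subset R^d$, so $\mathbb{D}_{ij}\equiv 0$ as well, and therefore $\mathbb{D}\equiv 0$.

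I do not anticipate a serious obstacle: the argument is driven entirely by the choice of test field. The only step warranting care is the control of the boundary--type term $\int x_i\, \partial_k\eta_R\, \D\mathbb{D}_{jk}$, where the two competing factors of size $R$ (from $x_i$) and $1/R$ (from $\partial_k\eta_R$) balance exactly, leaving only the finiteness of $\mathbb{D}$ to kill the term in the limit. This balance also signals that the result is sharp in the sense that the conclusion would fail for measures of infinite total mass.
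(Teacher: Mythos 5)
Your proof is correct and follows essentially the same strategy as the paper's: both extract the total mass $\mathbb{D}(R^d)$ by testing against linear vector fields truncated by a large-scale cutoff (exploiting that the $R$ growth of the field is exactly cancelled by the $1/R$ decay of the cutoff gradient, so that finiteness of $\mathbb{D}$ closes the argument), and both then upgrade vanishing of the mass to vanishing of the measure via positive semidefiniteness. The only cosmetic difference is that the paper first isolates a general extension lemma (to test functions with bounded gradient) and finishes with a matrix-level positive semidefiniteness argument, whereas you test directly with $\eta_R x_i \mathbf{e}_j$ and close with a measure-level Cauchy--Schwarz on the entries.
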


\begin{Remark} \label{SR1}

The assumption that the matrix $\mathbb{D}$ is positive semidefinite (or alternatively negative semidefinite, as a matter of fact), is absolutely essential. Indeed, DeLellis and Sz\' ekelyhidi in their
proof of the so-called oscillatory lemma in \cite{DelSze3} showed the existence of infinitely many smooth fields
$\mathbb{D} \in C^\infty_c(R^d; R^{d \times d}_{\rm sym})$ satisfying $\Div \mathbb{D} = 0$.

\end{Remark}

\begin{proof}[Proof of Proposition \ref{SP1}]
The proof relies on the extension of \eqref{S4} to all functions $\bfphi \in C^1(R^d; R^d)$ with
$\Grad \bfphi \in L^\infty (R^d; R^{d \times d})$, which is possible since $\mathbb{D}$ is a finite measure. This then permits to test \eqref{S4} by linear  functions $\bfphi$ and the conclusion follows from the positive semidefinitness of  $\mathbb{D}$.

To this end, let us consider a sequence of cut--off functions
\[
\psi_n \in \DC(R^d), \ 0 \leq \psi \leq 1,\ \psi_n(x) = 1
\ \mbox{for}\ |x| \leq n, \ \psi_n(x) = 0 \ \mbox{for}\ |x| \geq 2n, \ |\Grad \psi| \aleq \frac{1}{n}
\]
uniformly for $n \to \infty$.

For $\bfphi \in C^1(R^d; R^d)$, with $\Grad \bfphi \in L^\infty (R^d; R^{d \times d})$, we have
\[
|\bfphi(x)| \aleq (1 + n) \ \mbox{for all}\ x \in \mathrm{supp}\,\psi_{n};
\]
whence
\[
\begin{split}
0 &= \int_{R^d} \Grad (\psi_{n} \bfphi) : \D \mathbb{D} =
\int_{R^d} \psi_{n} \Grad \bfphi : \D \mathbb{D}
+ \int_{R^d} (\Grad \psi_{n}) \otimes \bfphi : \D \mathbb{D}\\
&= \int_{|x| \leq  n} \Grad \bfphi : \D \mathbb{D} +
\int_{n < |x| < 2n} \psi_{n} \Grad \bfphi : \D \mathbb{D} +
\int_{n < |x| < 2n}(\Grad \psi_{n}) \otimes \bfphi   : \D \mathbb{D}
\end{split}
\]
Seeing that
\[
| \psi_{n} \Grad \bfphi (x) | + | (\Grad \psi_{n}) \otimes \bfphi | \aleq 1 \ \mbox{whenever}\ n \leq |x| \leq 2n
\]
we may use the fact that $\mathbb{D}$ is a finite (signed) measure together with Lebesgue's dominated convergence theorem to  let $n \to \infty$ and conclude that
\begin{equation} \label{S5}
\int_{R^d} \Grad \bfphi : \D \mathbb{D} = 0 \ \mbox{for any}\ \varphi \in C^1(R^d; R^d),\ \Grad \bfphi \in L^\infty (R^d; R^{d \times d}).
\end{equation}

Finally, given a vector $\xi \in R^d$, we may use
\[
\bfphi(x) = \xi (\xi \cdot x)
\]
as a test function in \eqref{S5} to obtain
\[
\int_{R^d} (\xi \otimes \xi) : \D \mathbb{D} = 0 \ \mbox{for any}\ \xi \in R^d.
\]
As $\mathbb{D}$ is positive semidefinite in the sense of \eqref{eq:eq2}, i.e. $(\xi \otimes \xi) : \mathbb{D}$ is a non--negative finite measure on $R^{d}$, this yields $(\xi \otimes \xi) : \mathbb{D} = 0$ for any $\xi \in R^d$. 
Thus for any
$g \in C_b(R^d)$, $g \geq 0$, and the matrix
$
\int_{R^d} g \,\D \mathbb{D}
$
is positive semidefinite and we may infer
\[
\int_{R^d} g\, \D D_{i,j} = 0 \ \mbox{for any}\ i,j.
\]
As $g$ was arbitrary, this yields the desired conclusion $\mathbb{D} \equiv 0$.
\end{proof}

\subsubsection{Equation $\Div \mathbb{D} = 0$ in a bounded domain}

A trivial example of a constant--valued matrix shows that Proposition \ref{SP1} does not hold if $R^d$ is replaced by a bounded
domain $\Omega$ unless some extra restrictions are imposed. In addition to the hypotheses of Proposition \ref{SP1}, we shall assume that $\mathbb{D}$ vanishes sufficiently fast near the boundary $\partial \Omega$.

\begin{Proposition} \label{SP2}
Let $\Omega \subset R^d$ be a bounded domain.
Let $\mathbb{D} \in \mathcal{M}^+ (\Ov{\Omega}; R^{d \times d}_{\rm sym})$ satisfying
\begin{equation} \label{S6}
\int_{R^d} \Grad \bfphi : \D \mathbb{D} = 0 \ \mbox{for any}\ \varphi \in C^1_c(\Omega; R^d),
\end{equation}
and
\begin{equation} \label{S7}
\frac{1}{\delta} \int_{ \{x\in\Omega;{\rm dist}[x, \partial \Omega] \leq \delta\} } \D ({\rm trace})[\mathbb{D}]\to 0
\ \mbox{as}\ \delta \to 0.
\end{equation}

Then $\mathbb{D} \equiv 0$.

\end{Proposition}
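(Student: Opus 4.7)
The plan is to mimic the proof of Proposition \ref{SP1}, where the only essential obstruction to testing against linear functions $\bfphi(x) = \xi(\xi \cdot x)$ is that such $\bfphi$ are not admissible in \eqref{S6}. In Proposition \ref{SP1} this was overcome by a cut--off argument exploiting that $\mathbb{D}$ is a finite measure on all of $R^d$. Here we cannot cut off far away (the measure already sits in a bounded set), so instead we will cut off \emph{near} $\partial \Omega$, and the role previously played by the decay of $\mathbb{D}$ at infinity will be played by hypothesis \eqref{S7}.

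Concretely, I would choose a family $\psi_\delta \in C^1_c(\Omega)$ with $0 \leq \psi_\delta \leq 1$, $\psi_\delta \equiv 1$ on $\{ x \in \Omega : \dist[x, \partial \Omega] \geq 2\delta \}$, $\psi_\delta \equiv 0$ on $\{ x \in \Omega : \dist[x, \partial \Omega] \leq \delta \}$, and $|\Grad \psi_\delta| \aleq 1/\delta$. For any $\bfphi \in C^1(\Ov{\Omega}; R^d)$ (not necessarily vanishing on $\partial\Omega$), the product $\psi_\delta \bfphi$ is an admissible test function in \eqref{S6}, so
\[
0 = \int_{\Omega} \psi_\delta \Grad \bfphi : \D \mathbb{D} + \int_{\Omega} (\Grad \psi_\delta) \otimes \bfphi : \D \mathbb{D}.
\]
The first integral converges to $\int_{\Ov{\Omega}} \Grad \bfphi : \D \mathbb{D}$ by dominated convergence (using finiteness of $\mathbb{D}$). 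For the second, since $\mathbb{D}$ is positive semidefinite and symmetric, its entries are dominated by the trace (namely $|D_{ij}| \leq \frac{1}{2}(D_{ii} + D_{jj}) \leq \D(\mathrm{trace})[\mathbb{D}]$ as measures), so
\[
\left| \int_{\Omega} (\Grad \psi_\delta) \otimes \bfphi : \D \mathbb{D} \right| \aleq \frac{\|\bfphi\|_{C(\Ov{\Omega})}}{\delta} \int_{\{\dist[x, \partial \Omega] \leq 2\delta\}} \D(\mathrm{trace})[\mathbb{D}] \to 0
\]
as $\delta \to 0$ by hypothesis \eqref{S7}. This extends \eqref{S6} to all $\bfphi \in C^1(\Ov{\Omega}; R^d)$.

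Finally, for an arbitrary $\xi \in R^d$, I would take $\bfphi(x) = \xi (\xi \cdot x)$ (which is $C^1$ on $\Ov{\Omega}$), yielding $\Grad \bfphi = \xi \otimes \xi$ and thus
\[
\int_{\Ov{\Omega}} (\xi \otimes \xi) : \D \mathbb{D} = 0 \quad \mbox{for every } \xi \in R^d.
\]
Since $\mathbb{D}$ is positive semidefinite, $(\xi \otimes \xi) : \mathbb{D}$ is a non--negative measure, forcing $(\xi \otimes \xi) : \mathbb{D} \equiv 0$ for every $\xi$. Just as in the last step of Proposition \ref{SP1}, this implies that for every non--negative $g \in C(\Ov{\Omega})$ the symmetric positive semidefinite matrix $\int_{\Ov{\Omega}} g \, \D \mathbb{D}$ has vanishing $(\xi \otimes \xi)$--pairing for every $\xi$, hence is the zero matrix. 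Letting $g$ vary gives $D_{ij} \equiv 0$ for all $i,j$, so $\mathbb{D} \equiv 0$.

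The main obstacle is the second step, i.e.\ controlling the boundary--cutoff term $\int (\Grad \psi_\delta) \otimes \bfphi : \D \mathbb{D}$; this is precisely where hypothesis \eqref{S7} is indispensable, since the uniform bound $|\Grad \psi_\delta| \aleq 1/\delta$ is compensated exactly by $\delta^{-1}$--normalized mass of $\mathbb{D}$ in a $\delta$--tubular neighbourhood of $\partial \Omega$. The positive semidefiniteness of $\mathbb{D}$ plays a dual role: it lets us dominate all matrix entries by the trace (so only $\mathrm{trace}[\mathbb{D}]$ appears in \eqref{S7}), and it is what converts the vanishing of the quadratic form to the vanishing of the whole measure at the end.
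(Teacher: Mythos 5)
Your proof is correct and follows essentially the same approach as the paper: cut off near $\partial\Omega$ with a $\delta$--parameterized family $\psi_\delta$, use \eqref{S7} to kill the commutator term involving $\Grad\psi_\delta$, thereby extend \eqref{S6} to $\bfphi\in C^1(\Ov{\Omega};R^d)$, and then test with $\bfphi(x)=\xi(\xi\cdot x)$ together with positive semidefiniteness. Your explicit remark that positive semidefiniteness gives $|D_{ij}|\leq\tfrac12(D_{ii}+D_{jj})$, so the trace appearing in \eqref{S7} really does control the full matrix measure, is a detail the paper glides past; on the other hand your dominated--convergence step strictly speaking yields $\int_{\Omega}\Grad\bfphi:\D\mathbb{D}$ rather than $\int_{\Ov{\Omega}}$, and passing between the two uses that \eqref{S7} forces $\mathbb{D}(\partial\Omega)=0$, which the paper states explicitly and which does not affect the final conclusion.
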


\begin{proof}

Similarly to the proof of Proposition \ref{SP1}, it is enough to show that \eqref{S6} can be extended to a suitable function
$\varphi \in C^1(\Ov{\Omega}; R^d)$, whose gradient is constant.

It is a routine matter, cf. e.g. Galdi \cite{GAL}, to construct a sequence of cut--off functions $\psi_n$ enjoying the following properties:
\[
\psi_n \in C^1_c(\Omega), \ 0 \leq \psi_n \leq 1, \ \psi_n (x) = 1 \ \mbox{whenever}
\ {\rm dist}[x, \partial \Omega] > \frac{1}{n},\ |\Grad \psi_n| \aleq n.
\]
Thus, plugging $\psi_n \bfphi$, $\bfphi \in C^1(\Ov{\Omega}; R^d)$ in \eqref{S6} we get
\[
\begin{split}
0 &= \int_{\Omega} \Grad (\psi_{n} \bfphi) : \D \mathbb{D} =
\int_{\Omega} \psi_{n} \Grad \bfphi : \D \mathbb{D}
+ \int_{\Omega} (\Grad \psi_{n}) \otimes \bfphi : \D \mathbb{D}\\
&= \int_{{\rm dist}[x, \partial {\Omega}] > \frac{1}{n}} \Grad \bfphi : \D \mathbb{D} +
\int_{{\rm dist}[x, \partial {\Omega}] \leq \frac{1}{n}} \psi_{n} \Grad \bfphi : \D \mathbb{D} +
\int_{{\rm dist}[x, \partial {\Omega}] \leq \frac{1}{n}}(\Grad \psi_{n}) \otimes \bfphi   : \D \mathbb{D}
\end{split}
\]
Now, we observe that
\[
|\psi_{n} \Grad \bfphi(x)| + |(\Grad \psi_{n}) \otimes \bfphi(x)| \aleq n \ \mbox{whenever}\ {\rm dist}[x, \partial {\Omega}] \leq \frac{1}{n},
\]
which due to \eqref{S7} allows to pass to the limit as $n\to\infty$ in the second and the third term on the right hand side. The convergence of the first term follows from the fact that by \eqref{S7} the defect vanishes on the boundary, i.e.
\[
\int_{\partial \Omega} \D |\mathbb{D}| = 0,
\]
  and in the interior of $\Omega$ we have pointwise convergence of the corresponding integrand.
\end{proof}

\subsection{Strong convergence}

Applying Proposition \ref{SP1} in the situation of Theorem \ref{DT2} we obtain that $\mathfrak{R}_v\equiv 0$ and $ \mathfrak{R}_e \equiv 0$. In accordance with \eqref{A3}, this yields
\begin{equation} \label{C1b}
\begin{split}
E(\vr_n, \vm_n|\vr_{\infty},\vm_{\infty}) \to E(\vr, \vm|\vr_{\infty},\vm_{\infty}) 
\end{split}
\end{equation}
weakly-(*) in $L^\infty_{w^{*}}(0,T; \mathcal{M}^+(R^d))$. We show that this implies the strong convergence claimed in
Theorem \ref{DT2}.

First, we recall that both kinetic and internal energy are convex functions of
the density and the momentum so from \eqref{C1b} we obtain
\[ \int_0^T \int_{R^d} \left[ \frac{| \tmmathbf{m}_{n}|^2}{\vr_n} - 2 \vm_n \cdot \bu_\infty + \vr_n |\bu_\infty|^2 \right]
    \,\mathd x\, \mathd t \rightarrow \int_0^T
   \int_{R^d} \left[ \frac{| \tmmathbf{m} |^2}{\varrho} - 2 \vm \cdot \bu_\infty + \vr |\bu_\infty|^2 \right] \,\mathd x
   \,\mathd t, \]
\[ \int_0^T \int_{R^d} P (\varrho_{n}) - P' ({\varrho}_\infty)
   (\varrho_{n} - {\varrho}_\infty) - P ({\varrho}_\infty)\, \mathd x \,\mathd
   t \rightarrow \int_0^T \int_{R^d} P (\varrho) - P' ({\varrho}_\infty) (\varrho
   - {\varrho}_\infty) - P ({\varrho}_\infty)\, \mathd x \,\mathd t. \]
   Moreover, we may apply convexity again to deduce that
   \begin{equation}\label{eq:M2}
 \int_{B} \left[ \frac{| \tmmathbf{m}_{n}|^2}{\vr_n} - 2 \vm_n \cdot \bu_\infty + \vr_n |\bu_\infty|^2 \right]
    \,\mathd x\, \mathd t \rightarrow 
   \int_{B} \left[ \frac{| \tmmathbf{m} |^2}{\varrho} - 2 \vm \cdot \bu_\infty + \vr |\bu_\infty|^2 \right] \,\mathd x
   \,\mathd t,
   \end{equation}
\[\int_{B}P (\varrho_{n}) - P' ({\varrho}_\infty)
   (\varrho_{n} - {\varrho}_\infty) - P ({\varrho}_\infty)\, \mathd x \,\mathd
   t \rightarrow \int_{B} P (\varrho) - P' ({\varrho}_\infty) (\varrho
   - {\varrho}_\infty) - P ({\varrho_{\infty}})\, \mathd x \,\mathd t ,\]
for every Borel set $B \subset [0,T] \times R^d$.

Accordingly, choosing $B=[0,T]\times K$ for a compact set $K\subset R^{d}$, we obtain the convergence of the  norms of $\vr_{n}$ in $L^{\gamma}([0,T]\times K)$,  hence the strong convergence
\[
\vr_n \to \vr \ \mbox{ in}\ L^\gamma([0,T]\times K).
\]
The strong convergence on the full space $[0,T]\times R^{d}$ now follows by a tightness argument. Indeed, due to the weak convergence of the measures in \eqref{C1}, Prokhorov's theorem yields their tightness. In particular,  for  a given $\varepsilon>0$ there exists a compact set $K\subset R^{d}$ such that
\[
\sup_{n=1,2,\dots}\int_{0}^{T}\int_{K^{c}} P(\vr_n) - P'({\vr_{\infty}})(\vr_n - {\vr_{\infty}}) - P({\vr_{\infty}})\dx\dt<\varepsilon,
\]
\[
\int_{0}^{T}\int_{K^{c}} P(\vr) - P'({\vr_{\infty}})(\vr - {\vr_{\infty}}) - P({\vr_{\infty}})\dx\dt<\varepsilon.
\]
Finally, we write
\[
\begin{split}
&\|\vr_{n}-\vr\|_{(L^{\gamma}+L^{2})([0,T]\times R^{d})}\leq \|\vr_{n}-\vr\|_{(L^{\gamma}+L^{2})([0,T]\times K)}+\|\vr_{n}-\vr_{\infty}\|_{(L^{\gamma}+L^{2})([0,T]\times K^{c})}\\
&\qquad\qquad\qquad+\|\vr-\vr_{\infty}\|_{(L^{\gamma}+L^{2})([0,T]\times K^{c})}\\
&\qquad\leq\|\vr_{n}-\vr\|_{(L^{\gamma}+L^{2})([0,T]\times K)}+\int_{0}^{T}\int_{K^{c}} P(\vr_n) - P'({\vr_{\infty}})(\vr_n - {\vr_{\infty}}) - P({\vr_{\infty}})\dx\dt\\
&\qquad\qquad\qquad+\int_{0}^{T}\int_{K^{c}} P(\vr) - P'({\vr_{\infty}})(\vr - {\vr_{\infty}}) - P({\vr_{\infty}})\dx\dt,
\end{split}
\]
where the first term converges to zero as $n\to\infty$ whereas the second as well as the third term is small uniformly in $n$.

Let us now  establish the strong convergence of the momenta on $[0,T]\times R^{d}$. To this end, we recall that by the energy bounds it holds (up to a subsequence)
\[
\tmmathbf{h}_n \equiv \frac{\vm_n}{\sqrt{\vr_n}} \to \tmmathbf{h} \ \mbox{weakly in}\ L^{2}([0,T]\times B; R^d)
\]
for some $\tmmathbf{h}\in L^{2}([0,T]\times B; R^d)$,
and by \eqref{eq:M1}
\[
\vm_n \to \vm \ \mbox{weakly in}\ (L^{\frac{2 \gamma}{\gamma + 1}})([0,T]\times B; R^d)
\]
for any bounded ball $B \subset R^d$.
We shall show that
\[
\tmmathbf{h} ={\bf 1}_{\vr>0} \frac{\vm}{\sqrt{\vr}} \ \mbox{a.a. in}\ [0,T] \times B.
\]
Combining the weak convergence of $\tmmathbf{h}_{n}$ with the strong convergence of $\vr_{n}$ and the weak convergence of $\vm_{n}$ we obtain
\[
\sqrt{\vr_n} \tmmathbf{h}_n = \vm_n \to \vc{m} = \sqrt{\vr} \tmmathbf{h} \ \mbox{weakly in}\ L^{1}([0,T]\times B; R^d);
\]
whence it is enough to prove that $\tmmathbf{h} = 0$ whenever $\vr= 0$. By weak lower semicontinuity of the $L^{2}$-norm together with \eqref{eq:M2}, we obtain
\[
\int_{ \vr < \delta } {\bf 1}_B |\tmmathbf{h}|^2 \dx\dt \leq \lim_{n \to \infty}  \int_{ \vr < \delta }{\bf 1}_B \frac{|\vm_n|^2}{\vr_n} \dx\dt
= \int_{ \vr < \delta } {\bf 1}_B \frac{|\vm|^2}{\vr} \dx\dt .
\]
Now, it is enough to observe that in the limit $\delta\to0$, the left hand side converges to
$$
\int_{ \vr =0 } {\bf 1}_B |\tmmathbf{h}|^2 \dx\dt,
$$
whereas the right hand side vanishes, since due to the integrability of  the kinetic energy $\frac{|\vm|^{2}}{\vr}$ it holds that the set, where $\varrho=0$ and $\vm\neq 0$, is of zero Lebesgue measure. Thus $\tmmathbf{h} = 0$ whenever $\vr= 0$.

To summarize, we have shown that
\[ \frac{\tmmathbf{m}_{n}}{\sqrt{\varrho_{n}}} \rightarrow
   \tmmathbf{1}_{\varrho > 0} \frac{\tmmathbf{m}}{\sqrt{\varrho}}\ \mbox{weakly in}\ L^{2}([0,T]\times B; R^d)\]
and hence strongly due to \eqref{eq:M2}, which  implies the strong convergence
\[ \tmmathbf{m}_{n} = \sqrt{\varrho_{n}}
   \frac{\tmmathbf{m}_{n}}{\sqrt{\varrho_{n}}} \rightarrow
   \tmmathbf{m} \ \mbox{in}\  L^{\frac{2 \gamma }{\gamma + 1}}([0,T]\times B; R^d) . \]
   Finally, a tightness argument as for the density above implies the strong convergence
   $$
   (\vm-\vm_{\infty})\to (\vm-\vm_{\infty})\ \mbox{in}\  (L^{\frac{2\gamma}{\gamma+1}}+L^{2})([0,T]\times R^{d};R^{d}).
   $$

   The strong convergence of the densities and the momenta from Theorem~\ref{DT2} now follows immediately from the fact that uniformly in $n$
$$
(\vr_{n}- \vr_\infty)\in L^{\infty}(0,T;(L^{\gamma}+L^{2})(R^{d})),\quad (\vm_{n} - \vm_\infty) \in L^{\infty}(0,T;(L^{\frac{2\gamma}{\gamma+1}}+L^{2})(R^{d};R^{d})),
$$
due to the energy bound. The convergence (up to a subsequence) of the energies in $L^{1}$ is then a consequence of the strong convergence of $\frac{|\vm_{n}|}{\sqrt{\vr_{n}}}$ and $\vr_{n}$ together with \eqref{C1} and Vitali's theorem.
This completes the proof of Theorem \ref{DT2}.

\section{Concluding remarks}
\label{DD}

We conclude the paper by a short discussion on possible extensions of Theorem \ref{DT2}.
As indicated in Proposition \ref{SP2}, the conclusion of Theorem \ref{DT2} remains valid on bounded Lipschitz domains provided 
some extra assumptions about the behavior of the consistent approximation near the boundary is assumed. The relevant result can be stated as follows.
 
\begin{Theorem}[Asymptotic limit of consistent approximation in bounded domains] \label{DT3}
Let $\{ \vr_n, \vm_n \}_{n=1}^\infty$
be a consistent approximation of the isentropic Euler system in $(0,T) \times \Omega$ in the sense of Definition~\ref{DD4}, 
where $\Omega \subset R^d$ is a bounded Lipschitz domain, and where we have set $\vr_\infty = \vu_\infty = 0$ in the relative energy.

Then there exists a subsequence (not relabeled for simplicity) enjoying the following properties:
\[
\begin{split}
\vr_n &\to \vr \ \mbox{weakly-(*) in}\ L^\infty(0,T; L^\gamma(\Omega)),\\
\vm_n &\to \vm \ \mbox{weakly-(*) in}\ L^\infty(0,T; L^{\frac{2 \gamma}{\gamma + 1}}(\Omega; R^d)).
\end{split}
\]
Suppose, in addition, that 
\[
\limsup_{n \to \infty} \int_{ x \in \Omega; {\rm dist}[x, \partial \Omega] < \delta} 
\left( E(\vr_n, \vm_n) - E(\vr, \vm) \right)(\tau, \cdot) \dx 
\]
is of order $o(\delta)$ as $\delta \to 0$.
Then if $[\vr, \vm]$ is an admissible weak solution to the isentropic Euler system, then
\begin{equation} \label{DDD1}
\int_0^T \intO{ {E} \left(\vr_n, \vm_n \ \Big| \vr , \vm \right) } \dt \to 0,
\end{equation}
in particular,
\[
\begin{split}
\vr_n &\to \vr \ \mbox{in}\ L^q(0,T; L^\gamma (\Omega)),\\
\vm_n &\to \vm \ \mbox{in}\ L^q(0,T; L^{\frac{2 \gamma}{\gamma + 1}}(\Omega; R^d)),
\end{split}
\]
for any $1 \leq q < \infty$. Thus, for a suitable subsequence,
\[
\vr_n \to \vr, \ \vm_n \to \vm \ \mbox{a.a. in}\ (0,T) \times \Omega.
\]
\end{Theorem}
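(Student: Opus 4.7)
The plan is to follow the blueprint of the proof of Theorem~\ref{DT2}, the only genuine novelty being the verification of the boundary decay hypothesis~\eqref{S7} that permits the use of Proposition~\ref{SP2} in place of Proposition~\ref{SP1}. Since $\vr_\infty = \vm_\infty = 0$, the relative energy reduces to the total energy $E(\vr,\vm)$ itself, so the uniform bound~\eqref{D18} delivers at once, up to a subsequence, the two weak-(*) convergences claimed in the statement. Repeating the constructions of Section~\ref{A} on the bounded domain $\Ov{\Omega}$ by means of the dualities recorded in the ``Bounded domain'' paragraph, we build the internal-energy defect $\mathfrak{R}_e$, the convective defect $\mathfrak{R}_v$, and the total defect measure $\mathbb{D}=\mathfrak{R}_v+(\gamma-1)\mathfrak{R}_e\mathbb{I}$, together with the energy relation
\begin{equation*}
E(\vr_n,\vm_n)\to E(\vr,\vm)+\tfrac12\mathrm{trace}[\mathfrak{R}_v]+\mathfrak{R}_e
\quad\mbox{weakly-(*) in}\ L^\infty_{w^{*}}(0,T;\mathcal{M}^+(\Ov{\Omega})).
\end{equation*}

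Next, testing the approximate momentum equation against $\bfphi\in C^1_c((0,T)\times\Omega;R^d)$ and passing to the limit exactly as in~\eqref{S1} produces the identity
\begin{equation*}
\int_0^T\!\!\intO{\!\left[\partial_t\psi\,\vm\cdot\bfphi+\psi\mathbf{1}_{\vr>0}\frac{\vm\otimes\vm}{\vr}:\Grad\bfphi+\psi p(\vr)\Div\bfphi\right]}\dt=-\int_0^T\!\psi\!\int_\Omega\Grad\bfphi:\D\mathbb{D}(t)\,\dt.
\end{equation*}
Since $[\vr,\vm]$ is by assumption a weak solution of the Euler system, the left-hand side vanishes for every $\psi\in C^1_c(0,T)$ and every such $\bfphi$, whence $\int_\Omega\Grad\bfphi:\D\mathbb{D}(t)=0$ for a.a. $t\in(0,T)$ and every $\bfphi\in C^1_c(\Omega;R^d)$. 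This is precisely the distributional identity~\eqref{S6} required by Proposition~\ref{SP2}.

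The main obstacle, and the reason for the extra hypothesis in the statement, is the transfer of the $o(\delta)$ control on the pre-limit energy into the boundary decay condition~\eqref{S7} for the limit measure $\mathbb{D}$. Setting $U_\delta=\{x\in\Omega;\mathrm{dist}[x,\partial\Omega]<\delta\}$, which is open in $\Ov{\Omega}$, and recalling that the non-negative sequence $E(\vr_n,\vm_n)$ converges weakly-(*) as measures on $\Ov{\Omega}$ to $E(\vr,\vm)\,\mathrm{d}x+\tfrac12\mathrm{trace}[\mathfrak{R}_v]+\mathfrak{R}_e$, the Portmanteau lower semi-continuity on open sets yields, for a.a. $\tau\in(0,T)$,
\begin{equation*}
\int_{U_\delta}\D\!\left(\tfrac12\mathrm{trace}[\mathfrak{R}_v(\tau)]+\mathfrak{R}_e(\tau)\right)\leq\liminf_{n\to\infty}\int_{U_\delta}\bigl(E(\vr_n,\vm_n)-E(\vr,\vm)\bigr)(\tau,\cdot)\,\dx,
\end{equation*}
and the right-hand side is dominated by the corresponding $\limsup$, which is $o(\delta)$ by hypothesis. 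Since $\mathrm{trace}[\mathbb{D}]=\mathrm{trace}[\mathfrak{R}_v]+d(\gamma-1)\mathfrak{R}_e$ is in turn bounded by a constant multiple of $\tfrac12\mathrm{trace}[\mathfrak{R}_v]+\mathfrak{R}_e$ as non-negative measures, condition~\eqref{S7} holds for a.a. $\tau$, and Proposition~\ref{SP2} forces $\mathbb{D}(\tau)\equiv 0$; consequently $\mathfrak{R}_v\equiv 0$ and $\mathfrak{R}_e\equiv 0$.

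With both defect measures annihilated, the relative-energy convergence~\eqref{DDD1} and the strong convergences of $\vr_n$ and $\vm_n$ follow by repeating the final argument of Section~\ref{S}, which on a bounded $\Omega$ is in fact simpler: no tightness step is needed, the convergence of the $L^\gamma$ norms of $\vr_n$ produces strong convergence in $L^q(0,T;L^\gamma(\Omega))$, the identification $\vm_n/\sqrt{\vr_n}\to\mathbf{1}_{\vr>0}\vm/\sqrt{\vr}$ strongly in $L^2$ proceeds verbatim, and a standard Vitali argument yields the remaining $L^q$ convergences and almost-everywhere convergence along a subsequence.
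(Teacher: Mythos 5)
Your overall strategy is exactly the one the paper intends for this theorem, which is stated without proof in Section~\ref{DD}: redo the defect-measure construction of Section~\ref{A} on the bounded domain, pass to the limit in the momentum equation to obtain \eqref{S6}, verify the boundary-decay condition \eqref{S7} from the extra hypothesis, invoke Proposition~\ref{SP2} in place of Proposition~\ref{SP1}, and then repeat the final strong-convergence argument of Section~\ref{S} (which, as you note, is even simpler here because tightness is automatic on a bounded $\Omega$). The domination argument $\mathrm{trace}[\mathbb{D}]=\mathrm{trace}[\mathfrak{R}_v]+d(\gamma-1)\mathfrak{R}_e\aleq \tfrac12\mathrm{trace}[\mathfrak{R}_v]+\mathfrak{R}_e$ is also the right way to convert the energy-defect control into the hypothesis \eqref{S7} on the total defect.

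The one step that is not watertight as written is the invocation of Portmanteau \emph{for a.a.\ fixed} $\tau$. The convergence you use, namely
\[
E(\vr_n,\vm_n)\to E(\vr,\vm)\,\dx+\tfrac12\mathrm{trace}[\mathfrak{R}_v]+\mathfrak{R}_e
\quad\mbox{weakly-(*) in}\ L^\infty_{w^{*}}(0,T;\mathcal{M}^+(\Ov{\Omega})),
\]
is tested against $L^1(0,T;C(\Ov{\Omega}))$ and does \emph{not} give weak-* convergence of measures on $\Ov{\Omega}$ for a.a.\ individual time slices $\tau$; Portmanteau cannot be applied time-slice by time-slice. To make the step rigorous, first work with the time-integrated quantity: pick $\varphi\in C(\Ov{\Omega})$ with $1_{U_\delta}\le\varphi\le 1_{U_{2\delta}}$ (Urysohn, using that $\Ov{U_\delta}$ and $\Ov{\Omega}\setminus U_{2\delta}$ are disjoint compact sets) and a nonnegative $\psi\in L^1(0,T)$, pass to the limit to obtain
\[
\int_0^T\psi(t)\int_{U_\delta}\D\Big(\tfrac12\mathrm{trace}[\mathfrak{R}_v(t)]+\mathfrak{R}_e(t)\Big)\dt
\le\lim_{n\to\infty}\int_0^T\psi(t)\int_{U_{2\delta}}\big(E(\vr_n,\vm_n)-E(\vr,\vm)\big)(t,\cdot)\dx\dt,
\]
and then use the uniform energy bound \eqref{D18} as a dominating function to apply a reverse Fatou lemma in $t$, pulling the $\limsup_n$ inside the time integral, where the extra hypothesis makes the integrand $o(\delta)$ for a.a.\ $t$. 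Varying $\psi$ then gives \eqref{S7} for a.a.\ $\tau$. With this repair the proof is complete and matches the route the paper indicates.
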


The hypothesis \eqref{DDD1} is satisfied if, for instance, 
\[
\lim_{n \to \infty} \left\| E(\vr_n, \vm_n) - E(\vr, \vm) \right\|_{L^1((0,T) \times \mathcal{U} )} = 0,
\]
where $\mathcal{U}$ is an open neighborhood of $\partial \Omega$.


\def\cprime{$'$} \def\ocirc#1{\ifmmode\setbox0=\hbox{$#1$}\dimen0=\ht0
  \advance\dimen0 by1pt\rlap{\hbox to\wd0{\hss\raise\dimen0
  \hbox{\hskip.2em$\scriptscriptstyle\circ$}\hss}}#1\else {\accent"17 #1}\fi}

\end{document}